\newtheorem{definition}{Definition}
\newtheorem{example}{Example}
\newtheorem{theorem}{Theorem}
\newtheorem{lemma}{Lemma}
\DeclareMathOperator{\Ima}{Im} %image
\author[1]{Benjamin Jones \thanks{Corresponding to Benjamin Jones. Email: BenjaminDanielJones@gmail.com}}
\author[1,2,3]{Guo-Wei Wei}
\affil[1]{Department of Mathematics, Michigan State University, MI, 48824, USA}
\affil[2]{Department of Electrical and Computer Engineering, Michigan State University, MI 48824, USA}
\affil[3]{Department of Biochemistry and Molecular Biology, Michigan State University, MI 48824, USA}
    \renewcommand*{\@fnsymbol}[1]{\ensuremath{\ifcase#1\or \dagger\or *\or *\or
   \mathsection\or \else\@ctrerr\fi}}
\date{}
\title{PETLS: PErsistent Topological Laplacian Software}
\date{March 2026}
\begin{document}
\maketitle
\paragraph{Abstract} Persistent topological Laplacians are operators that provide persistent Betti numbers and additional multiscale geometric information through the eigenvalues of the persistent topological Laplacian matrix. We introduce a framework and novel algorithm to aid in the computation of persistent topological Laplacians. We implement existing and new persistent Laplacian algorithms in an efficient and flexible C++ library with Python bindings, titled PETLS: PErsistent Topological Laplacian Software. As part of this library, we interface with several complexes commonly used in topological data analysis (TDA), such as simplicial, alpha, directed flag, Dowker, and cellular Sheaf. Because increased efficiency broadens the set of computationally feasible applications, we provide recommendations on how to use algorithms and complexes for data analysis in machine learning.

\paragraph{Keywords} Persistent Laplacian, Topological data analysis, Persistent spectral graph. 

\newpage
    \tableofcontents
\newpage

\section{Introduction} 

Topological data analysis (TDA) has emerged as a leading approach to understand the shape of complex data. It has been used in many applications including predicting drug resistance \cite{chen2025drug}, fMRI analysis \cite{rieck2020uncovering}, multi-modal sensing \cite{schrader2023topological}, and higher-order signal processing \cite{schaub2021signal}. Topological deep learning (TDL), introduced in 2017 \cite{cang2017topologynet}, is an increasingly prominent paradigm for incorporating topological structure into deep learning frameworks, creating interpretable learning models, and processing complex data \cite{tdl_position,hajij2022topological,pun2022persistent}. 

The most celebrated tool of TDA is persistent homology (PH), an algebraic topology technique that provides quantitative and qualitative representations of the shape of data at various scales. However, this shape quantifier does not always contain enough information to draw meaningful conclusions. The need for more powerful geometric and topological descriptors of data at multiple scales motivated the creation of persistent topological Laplacians (PTLs) \cite{wang2020PSG}, often called persistent Laplacians (PLs) or persistent combinatorial Laplacians. PTLs bridge between spectral theory and algebraic topology. Specifically, a PTL can be thought of as a generalization of the graph Laplacian to higher-dimensional multi-scale settings in a way that also reproduces the topological invariants of persistent homology through its harmonic spectra. Furthermore, the eigenvectors of PTLs have no counterpart in persistent homology and can be applied independently \cite{liu2025manifold}. In the non-persistent setting, the combinatorial Laplacian is a generalization of graph Laplacians to simplicial complexes \cite{eckmann1944harmonische}. 

Persistent combinatorial Laplacians, persistent Laplacians, and related spectral theory structures have spread to several problem domains, from theoretical analysis \cite{liu2024algebraic, memoli_PL}, computational algorithms\cite{dong2024faster, wang2021hermes},  early prediction of COVID-19 variant dominance \cite{CHEN2022106262} to random walks on simplicial complexes \cite{schaub2020random} and persistent Rayleigh quotients for single-cell signal detection \cite{hoekzema2022multiscale}. Numerical experiments in more than 30 datasets demonstrated that persistent Laplacians offer more accurate predictions than persistent homology in protein engineering \cite{qiu2023persistent}. Combinatorial Laplacians are incorporated in the major topological deep learning library TopoX \cite{hajij2024topox}. HERMES is an earlier software for computing persistent Laplacians \cite{wang2021hermes}. This persistent spectral approach was generalized to the persistent Dirac operator formulation  \cite{ameneyro2024quantum}. Similar approaches in differential topology \cite{chen2021evolutionary} and geometric topology \cite{jones2024khovanov} have also been  developed for data on differential manifolds and curves embedded in 3 space, respectively.  A survey of these PTL developments can be found in \cite{wei_Survey}. More broad perspectives on TDA and TDL beyond persistent homology are reviewed in  \cite{suTDL}

In Figure \ref{fig:pipeline}, we show a framework of how persistent topological Laplacians can be created and used. Given a filtration of complexes, we produce a collection of PTL matrices, compute their eigenvalues and eigenvectors, and summarize those eigenvalues and eigenvectors in different filtration pairs $(a,b)$. Then these eigenvalue summaries or the eigenvectors themselves \cite{grande2024disentangling, liu2025manifold} are used for analysis, possibly as features for machine learning algorithms.

\begin{figure}[htb!]
    \centering
    \includegraphics[width=6in]{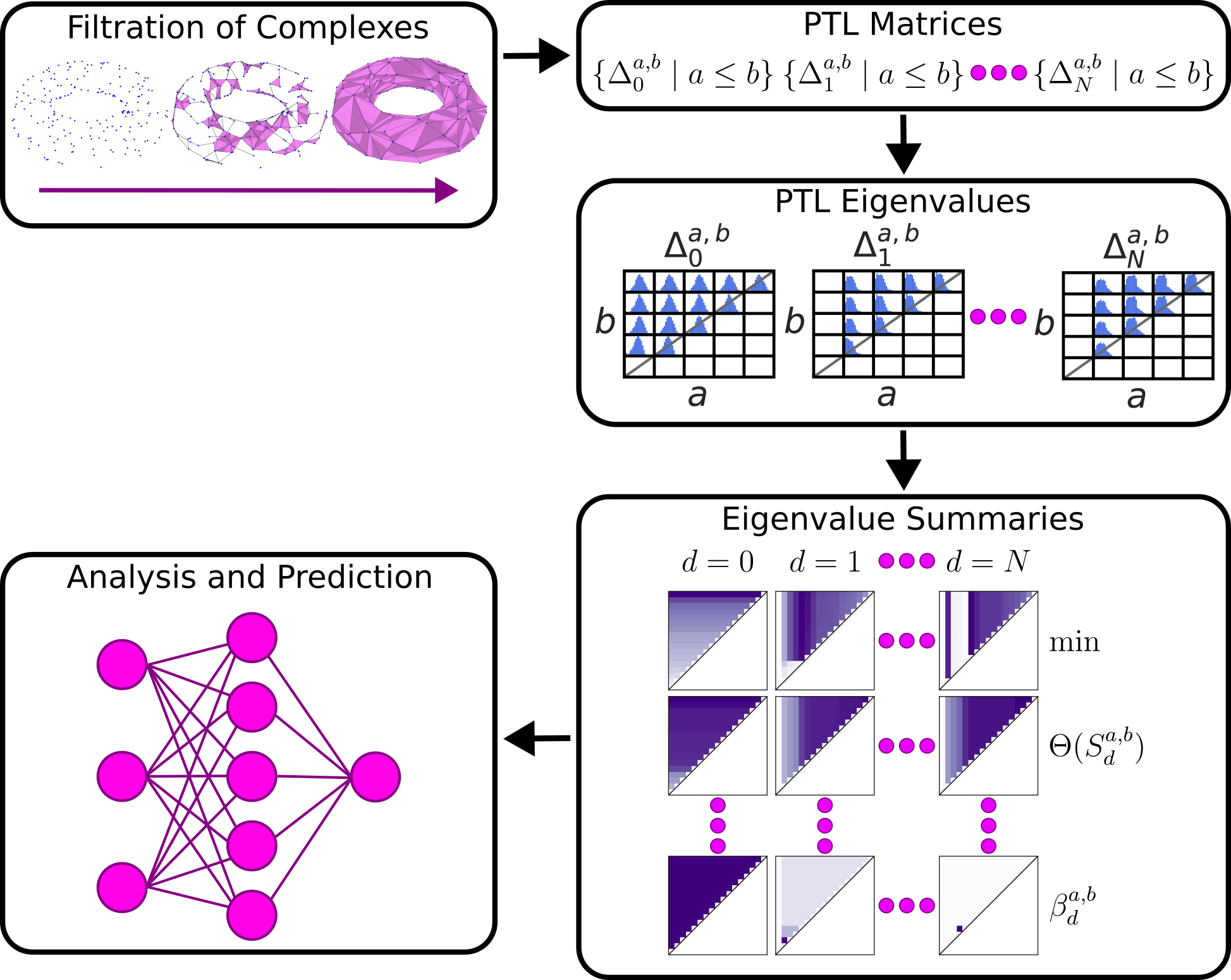}
    \caption{The process for creating and using persistent topological Laplacians (PTLs).  Pictured is an alpha filtration of a torus with maximum PTL dimension $N=2$, with summary function $\Theta$ as the maximum eigenvalue function.}
    \label{fig:pipeline}
\end{figure}

There are three main challenges limiting the broader impact of PTLs:
\begin{itemize}
    \item[]\textbf{Problem 1:} Scalability for large datasets.
    \item[]\textbf{Problem 2:} Ease of use for applied scientists and the TDA community
    \item[]\textbf{Problem 3:} Human interpretation of numerical results.
\end{itemize}

The objective of this work is to address these computational challenges. We make substantial progress on \textbf{Problem 1} and provide a robust solution for \textbf{Problem 2} through a flexible software library. These advances enable the computational scale and breadth of possible applications needed to address \textbf{Problem 3}, which we demonstrate by providing data-driven guidance on places for future investigation.

In Section \ref{sec:background}, we introduce the mathematics of the Persistent Topological Laplacian and highlight some key points of the literature. Definitions of some commonly used topological complexes and filtrations are given in Appendix \ref{appendix:complexes}. 

In Section \ref{sec:design}, we address \textbf{Problem 2}. We discuss the flexible design of the library, including the features of the C++ and Python versions. This discussion includes what types of simplicial and more general complexes are possible to study with the library.

In Section \ref{sec:computation}, we tackle \textbf{Problem 1}. We exploit structures of the PTL matrix for computational efficiency, which we implement in an extremely flexible C++ library. The improvements are substantial enough that the bottleneck of the pipeline shifts to computing the eigenvalues of the PTL matrix, rather than constructing the matrix itself. We address this through multiple avenues, including by introducing an algorithm that uses persistent homology to remove redundant information from the PTL. 

In Section \ref{sec:representations}, we show that our approaches to Problem 1 and Problem 2 yield the ability to compute persistent Laplacians of much larger complexes, which demonstrates where we should look to address \textbf{Problem 3}. 

Finally, in Section \ref{sec:discussion}, we discuss the impact of this work and how these advances guide the future directions of this extremely active area of research.

\section{Background}
\label{sec:background}

In Subsection \ref{subsec:combinatorial} we will build up the notions of simplicial complexes, homology, and combinatorial Laplacians. Then in Subsection \ref{subsec:persistence} we will introduce the notions of filtrations, persistent homology, and the persistent Laplacian. Then in Subsection \ref{subsec:computation} we will discuss the main computational approaches that have already been developed to calculate the persistent Laplacian.

\subsection{Combinatorial Laplacian}\label{subsec:combinatorial}

The fundamental constructions of many different persistent topological Laplacians are nearly the same, whether it is for simplicial complexes, directed simplicial complexes, path complexes, or hyperdigraph complexes. To keep the explanation clear for a wider audience by using standard terminology, we will work with simplicial complexes. It is important to note that when dealing with more sophisticated complexes that are not simplicial complexes, like the path complex, we only require a chain complex where every chain group has real coefficients and a finite basis. In practice, this means that most complexes used in topological data analysis can fit into the persistent topological Laplacian framework, and the software library is built to accommodate this level of generality.

We will now describe the standard concepts of an abstract simplicial complex, chains, a boundary map, and homology. The standard topology introduction for these topics is \cite{HatcherAT}, and a popular TDA-oriented introduction is \cite{otter_roadmap_2017}.

\begin{definition} An \textbf{abstract simplicial complex} on a finite vertex set $V$ is a collection $K$ of subsets of $V$ with the property that if $A\in K$ and $B\subset A$, then $B\in K$. A set in $K$ with $n+1$ elements is called an \textbf{$n$-simplex} (plural simplices), and the set of all $n$-simplices is denoted $K_n$. If $B\subset A\in K$, we call $B$ a \textbf{face} of $A$.
\end{definition}

The abstract simplicial complex is a convenient combinatorial perspective, but a geometric interpretation is often more useful for visualizing the shape of data. In Figure \ref{fig:simplicial}, we show a geometric simplicial complex as a union of geometric $n$-simplices, where $0$-simplices are vertices or points, $1$-simplices are edges connecting two points, $2$-simplices are triangles bordered by three edges, and $3$-simplices are tetrahedra bordered by four $2$-simplices.

\begin{figure}[htpb]
    \centering
    \includegraphics[width=4.325cm]{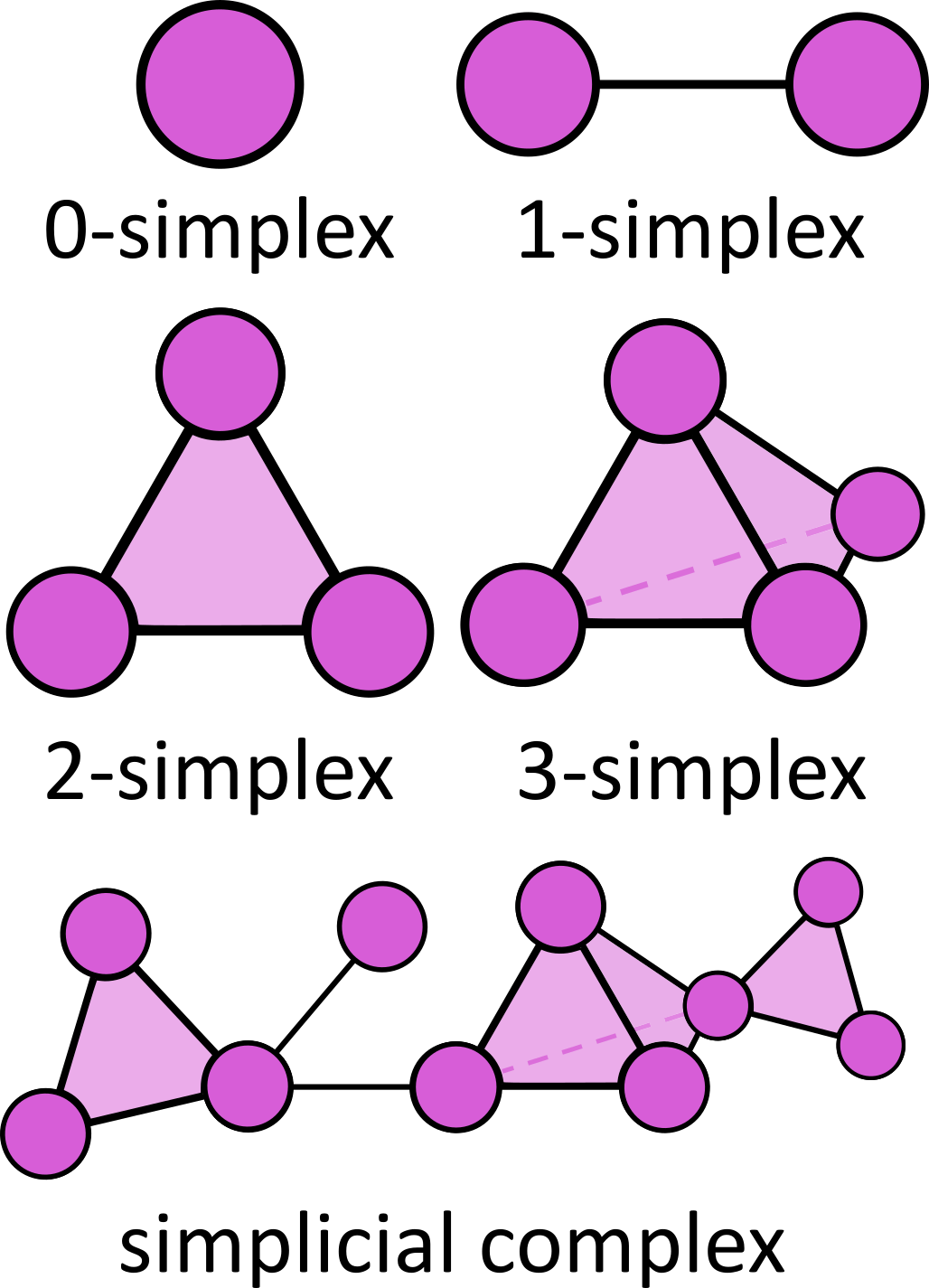}
    \caption{Simplices and a simplicial complex}
    \label{fig:simplicial}
\end{figure}

Next, we define chain groups and boundary map. We must fix a coefficient system $\mathbb{K}$, which is often $\mathbb{Z},\mathbb{Z}_2,\mathbb{Q},$ or $\mathbb{R}$. Most TDA settings assume $\mathbb{K}$ is a field, and for Persistent Laplacians we will explicitly consider only $\mathbb{R}$ for its natural inner-product structure. 

\begin{definition}
    For a simplicial complex $K$, the \textbf{$n$-dimensional chain group} with coefficients in $\mathbb{K}$ is 
    \[
        C_n(K;\mathbb{K}) = \mathrm{span}_{\mathbb{K}}\{n\text{-simplices}\}.
    \]
    When the complex or coefficient system is clear from context, we just use $C_n(K)$ or $C_n$.
\end{definition}

In the setting of persistent Laplacians, this means that the chain groups are vector spaces, where the basis elements are indexed by the simplices. These spaces $C_n(K)$ come with a boundary map $d_n$, which geometrically sends an $n$-simplex to its boundary. The combinatorial description accounts for orientations of the simplices. 

\begin{definition}
    The boundary map $d_n:C_n(K)\to C_{n-1}(K)$ is the linear map defined on basis elements by
    \[
    d_n([v_0,v_1,\dots,v_n]) =\sum_{i=0}^{n}(-1)^i[v_0,v_1,\dots,\widehat{v_i},\dots, v_n],
    \]
    where $\widehat{v_i}$ means that vertex $v_i$ is omitted from the list of vertices in the simplex.
\end{definition}

The chain groups with boundary maps forms a chain complex $(C_\bullet,d_\bullet)$, meaning $d_n\circ d_{n+1}=0$ for all $n$, which is shown in more detail in \cite{HatcherAT}. We often write the chain complex as

\[
    \cdots \xrightarrow[]{d_{n+1}} C_n\xrightarrow[]{d_n}C_{n-1}\xrightarrow[]{d_{n-1}}\cdots \xrightarrow[]{d_2} C_1\xrightarrow[]{d_1}C_0\to 0.
\]

Since $d_n\circ d_{n+1}=0$, we know $\Ima d_{n+1}\subset \ker d_n$, which means the quotient $\ker d_n/\Ima d_{n+1}$ is defined. This is called the homology.

\begin{definition}
    The $n$-th homology of a simplicial complex $K$ with coefficients in $\mathbb{K}$ is 
    \[
    H_n(K;\mathbb{K}) = \ker d_n/ \Ima d_{n+1}.
    \]
    When the coefficient system is clear from context, we just use $H_n(K)$. Elements of $\ker d_n$ are called cycles and elements of $\Ima d_{n+1}$ are called boundaries.
\end{definition}

The standard interpretation of homology is that its rank, called the Betti number $\beta_n$, ``counts holes" in the given dimension, i.e. in dimension $1$ we have loops and dimension $2$ we have voids; in dimension $0$ it counts the number of connected components. 

\begin{example}\label{ex:homology}
    In Figure \ref{fig:square_h1} we show a simplicial complex $K$ with vertices $K_0=\{[1],[2],[3],[4]\}$, edges $K_1=\{[12],[13],[14],[24],[34]\}$, and $2$-simplices $K_2=\{[134]\}$. Then the chains are 
    \begin{align*}
        C_0(K;\mathbb{R}) &= \mathrm{span}_{\mathbb{R}}\{[1],[2],[3],[4]\}\\
        &\cong \mathbb{R}^4,\\
        C_1(K;\mathbb{R}) &= \mathrm{span}_{\mathbb{R}}\{[12],[13],[14],[24],[34]\}\\ 
        &\cong \mathbb{R}^5,\\
        C_2(K;\mathbb{R}) &= \mathrm{span}_{\mathbb{R}}\{[134]\}\\
        &\cong \mathbb{R}^1,\\
        C_3(K;\mathbb{R}) &= \{0\}.
    \end{align*}
    The boundary maps are
    \begin{equation*}
        d_1 = \begin{blockarray}{cccccc}
            & [12] & [13] & [14] & [24] & [34]\\
            \begin{block}{c(ccccc)}
                {[1]} & -1 & -1 & -1 & 0 & 0\\
                {[2]} & 1 & 0 & 0 & -1 & 0\\
                {[3]} & 0 & 1 & 0 & 0 & -1\\
                {[4]} & 0 & 0 & 1 & 1 & 1\\
        \end{block}
        \end{blockarray}\text{, }\qquad
        d_2 = \begin{blockarray}{cc}
        & [134]\\
        \begin{block}{c(c)}
            {[12]} & 0\\
            {[13]} & 1\\
            {[14]} & -1\\
            {[24]} & 0\\
            {[34]} & 1\\
        \end{block}
        \end{blockarray}.
    \end{equation*}

    Then $\ker d_1 = \mathrm{span}_{\mathbb{R}}\{[12]-[14]+[24],[13]-[14]+[34]\}$ and $\Ima d_2 = \mathrm{span}_{\mathbb{R}}\{[13]-[14]+[34]\}$. Then by the definition of homology, $H_1(K;\mathbb{R})\cong \mathrm{span}_{\mathbb{R}}\{[12]-[14]+[24]\}\cong \mathbb{R}^1$, and $\beta_1=1$. Also, $\ker(C_0\to 0)= C_0$ and $\Ima d_1 = \mathrm{span}_{\mathbb{R}}\{[2]-[1],[3]-[1],[4]-[1],[4]-[2],[4]-[3]\}=\mathrm{span}_{\mathbb{R}}\{[2]-[1],[3]-[1],[4]-[1]\}.$ Then $H_0(K;\mathbb{R})\cong \mathbb{R}^1$ and $\beta_0=1$. 
    
    Note that although $[13]-[14]+[34]$ is a cycle, it is also the boundary of $[134],$ so it does not contribute to the homology. The loop $[12]-[14]+[24]$ is a nontrivial cycle that does contribute to homology, so $H_1(K)$ is nontrivial. The complex has one connected component, quantified by $\beta_0=1$.
    
    \begin{figure}
        \centering
        \includegraphics[width=1.80cm]{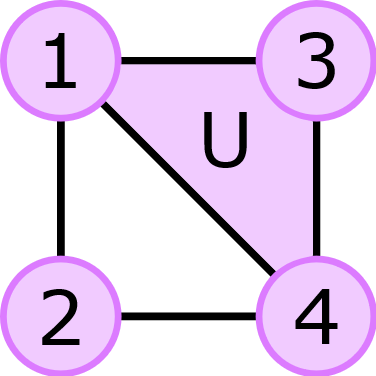}
        \caption{Simplicial complex for Example \ref{ex:homology} and Example \ref{ex:combinatorial}.}
        \label{fig:square_h1}
    \end{figure}
\end{example}

We now turn our attention toward building the Combinatorial Laplacian, where we will fix our coefficients to be $\mathbb{K}=\mathbb{R}$. As each $C_n(K;\mathbb{R})$ is a real vector space, we may give it the standard inner product $\langle \cdot,\cdot\rangle_{n}:C_n\times C_n\to \mathbb{R}$ defined on the basis of $n$-simplices of $C_n$ and extending bilinearly to all of $C_n$, 
\[\langle\sigma,\tau\rangle_n=\begin{cases} 1, \text{ if }\sigma=\tau\\
    0, \text{ if }\sigma\neq\tau
\end{cases}.
\]

Linear maps $d_n:C_n\to C_{n-1}$ have adjoints $d_n^*:C_{n-1}\to C_n$ with respect to these inner products such that $\langle \sigma,d_n\tau\rangle_{n-1} = \langle d_n^*\sigma, \tau\rangle_{n}$. The adjoints are actually the standard coboundary maps. This is because finite-dimensional real inner product spaces are self-dual: $C^n:= \hom (C_n,\mathbb{R})\cong C_n$ through the isomorphism $\Phi:C_n\to C^n$ where $\Phi(\sigma)=\phi_\sigma$, and $\phi_\sigma$ is defined by $\phi_\sigma(\tau)=\langle \sigma,\tau\rangle$. The inverse is $\phi_\sigma\mapsto \sigma$. Under this isomorphism, the coboundary map satisfies the adjoint property, and the adjoint is unique. We can add these adjoints to our chain complex diagram to obtain

\begin{center}        
    \begin{tikzcd}
    C_{n+1}(K) \arrow[r, "d_{n+1}", shift left, red] & C_n(K) \arrow[r, "d_{n}", shift left, blue] \arrow[l, "d_{n+1}^*", shift left, red] & C_{n-1}(K) \arrow[l, "d_{n}^*", shift left, blue]
    \end{tikzcd}
\end{center}

The compositions $\Delta_n^{\text{up}}=d_{n+1}\circ d_{n+1}^*$ and $\Delta_n^{\text{down}}=d_n^*\circ d_n$ are then self-maps of $C_n(K):$

\begin{center}        
    \begin{tikzcd}
    \arrow[loop left, "\Delta_n^{\text{up}}=d_{n+1}\circ d_{n+1}^*", red] C_n(K) \arrow[loop right, "\Delta_n^{\text{down}}=d_{n}^*\circ d_{n}", blue]
    \end{tikzcd}
\end{center}

Moreover, these maps can be added, and this is precisely the combinatorial Laplacian of a simplicial complex.

\begin{definition}
    The \textbf{$n$-th combinatorial Laplacian} $\Delta_n:C_n(K;\mathbb{R})\to C_n(K;\mathbb{R})$ of a simplicial complex $K$ is 
    \begin{align*}
        \Delta_n &= {d_{n+1}d_{n+1}^*} + {d_n^* d_n}\\
        &= \Delta_n^{\text{up}}+\Delta_n^{\text{down}}.
    \end{align*}   
    The maps $\Delta_n^{\text{up}}$ and $\Delta_n^{\text{down}}$ are called the \textbf{up-Laplacian} and \textbf{down-Laplacian}, respectively. The \textbf{spectral gap}, $\lambda_n$, is the least non-zero eigenvalue.
\end{definition}

The combinatorial Laplacian has the following two key properties. 
\begin{theorem} For a simplicial complex $K$, $\ker\Delta_n \cong H_n(K;\mathbb{R})$.    
\end{theorem}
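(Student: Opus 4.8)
The plan is to prove the combinatorial analogue of the Hodge decomposition theorem: the kernel of $\Delta_n$ consists precisely of the \emph{harmonic} $n$-chains, and these form a canonical complement to the boundaries $\Ima d_{n+1}$ inside the cycles $\ker d_n$. The entire argument rests on the fact, already established in the excerpt, that each $C_n(K;\mathbb{R})$ is a finite-dimensional real inner product space, so orthogonal complements exist and the adjoints $d_n^*$ behave as expected.

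First I would identify $\ker \Delta_n$ with $\ker d_n \cap \ker d_{n+1}^*$. The key observation is that $\Delta_n$ is self-adjoint and positive semidefinite: for any $x \in C_n$,
\[
\langle \Delta_n x, x\rangle = \langle d_{n+1} d_{n+1}^* x, x\rangle + \langle d_n^* d_n x, x\rangle = \|d_{n+1}^* x\|^2 + \|d_n x\|^2.
\]
Since both summands are nonnegative, $\Delta_n x = 0$ forces $\langle \Delta_n x, x\rangle = 0$, hence $d_{n+1}^* x = 0$ and $d_n x = 0$; the converse is immediate. Thus $\ker \Delta_n = \ker d_n \cap \ker d_{n+1}^*$. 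Invoking the standard identity $\ker d_{n+1}^* = (\Ima d_{n+1})^\perp$ — which holds because $\langle d_{n+1}^* x, y\rangle = \langle x, d_{n+1} y\rangle$ vanishes for all $y$ exactly when $x \perp \Ima d_{n+1}$ — I obtain $\ker \Delta_n = \ker d_n \cap (\Ima d_{n+1})^\perp$.

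Next I would assemble the orthogonal decomposition. Because $d_n \circ d_{n+1} = 0$, we have $\Ima d_{n+1} \subseteq \ker d_n$, so inside the subspace $\ker d_n$ we may split off $\Ima d_{n+1}$ and its orthogonal complement \emph{taken within $\ker d_n$}, which is exactly $\ker d_n \cap (\Ima d_{n+1})^\perp = \ker \Delta_n$. This yields the orthogonal direct sum
\[
\ker d_n = \Ima d_{n+1} \oplus \ker \Delta_n.
\]
Quotienting then gives $H_n(K;\mathbb{R}) = \ker d_n / \Ima d_{n+1} \cong \ker \Delta_n$, with the isomorphism realized concretely as the composite $\ker \Delta_n \hookrightarrow \ker d_n \twoheadrightarrow \ker d_n / \Ima d_{n+1}$. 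This map is injective because $\ker \Delta_n \cap \Ima d_{n+1} = 0$, and surjective because every cycle is congruent modulo $\Ima d_{n+1}$ to its harmonic component.

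I expect the only delicate point to be the adjoint and orthogonality bookkeeping in the second step — in particular, justifying $\ker d_{n+1}^* = (\Ima d_{n+1})^\perp$ and being careful that the orthogonal complement of $\Ima d_{n+1}$ is computed \emph{inside} $\ker d_n$ rather than inside all of $C_n$. Both depend on finite-dimensionality and on the nondegeneracy of the chosen inner product, which the excerpt has already secured. Everything else is a formal consequence of positive semidefiniteness together with the inclusion $\Ima d_{n+1} \subseteq \ker d_n$, so no substantial obstacle remains once the Hodge decomposition is in place.
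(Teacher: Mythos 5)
Your proof is correct and follows essentially the same route as the paper, which does not argue the theorem directly but defers to Lim's general Hodge-decomposition result for linear maps $A,B$ with $AB=0$ (Theorem 5.2 in the cited reference): your steps — $\ker\Delta_n=\ker d_n\cap\ker d_{n+1}^*$ via positive semidefiniteness, the identity $\ker d_{n+1}^*=(\Ima d_{n+1})^\perp$, and the orthogonal splitting $\ker d_n=\Ima d_{n+1}\oplus\ker\Delta_n$ — are precisely the standard proof of that result specialized to $A=d_n$, $B=d_{n+1}$. The only difference is that you supply in full the details the paper outsources to the citation.
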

\begin{theorem}For a simplicial complex $K$, $C_n(K;\mathbb{R})\cong \Ima d_n^* \oplus \ker \Delta_n \oplus \Ima d_{n+1}$.
\end{theorem}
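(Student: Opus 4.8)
The plan is to realize this as the combinatorial Hodge decomposition, built entirely from the orthogonal-complement structure of finite-dimensional real inner product spaces together with the harmonic characterization of $\ker\Delta_n$. The single fact I would invoke repeatedly is that for any linear map $A\colon V\to W$ between finite-dimensional real inner product spaces one has the orthogonal splitting $W=\Ima A\oplus\ker A^*$, equivalently $\Ima A=(\ker A^*)^\perp$. I would apply this twice inside $C_n$: to $A=d_{n+1}\colon C_{n+1}\to C_n$ to obtain $C_n=\Ima d_{n+1}\oplus\ker d_{n+1}^*$, and to $A=d_n^*\colon C_{n-1}\to C_n$ to obtain $C_n=\Ima d_n^*\oplus\ker d_n$.

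The key intermediate step is to identify the harmonic space, namely to show
\[
\ker\Delta_n=\ker d_{n+1}^*\cap\ker d_n.
\]
The inclusion $\supseteq$ is immediate from the definition $\Delta_n=d_{n+1}d_{n+1}^*+d_n^*d_n$. For $\subseteq$, I would take $z\in\ker\Delta_n$ and compute $0=\langle\Delta_n z,z\rangle=\|d_{n+1}^*z\|^2+\|d_n z\|^2$ using the adjoint relations; since both summands are nonnegative, each must vanish, giving $d_{n+1}^*z=0$ and $d_n z=0$. This is the one place where the positivity of the two inner products does the real work.

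With the harmonic space identified, I would assemble the decomposition. Because $d_n\circ d_{n+1}=0$ we have $\Ima d_{n+1}\subseteq\ker d_n$, so I can compute the orthogonal complement of $\Ima d_{n+1}$ \emph{within} $\ker d_n$: it is $\ker d_n\cap(\Ima d_{n+1})^\perp=\ker d_n\cap\ker d_{n+1}^*=\ker\Delta_n$. Hence $\ker d_n=\Ima d_{n+1}\oplus\ker\Delta_n$ orthogonally. Substituting this into the splitting $C_n=\Ima d_n^*\oplus\ker d_n$ yields
\[
C_n=\Ima d_n^*\oplus\ker\Delta_n\oplus\Ima d_{n+1},
\]
as desired. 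I would also record that the three summands are pairwise orthogonal, which follows directly: $\langle d_n^*x,d_{n+1}y\rangle=\langle x,d_nd_{n+1}y\rangle=0$ gives $\Ima d_n^*\perp\Ima d_{n+1}$, while the remaining orthogonalities are immediate from $\ker\Delta_n\subseteq\ker d_n\cap\ker d_{n+1}^*$.

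I expect no deep obstacle here; the main subtlety is organizational, namely keeping straight which adjoint lives in which space and intersecting the two orthogonal decompositions so that $\Ima d_{n+1}$ is split off from $\ker d_n$ rather than from all of $C_n$. The essential analytic content is confined to the positivity argument establishing $\ker\Delta_n=\ker d_{n+1}^*\cap\ker d_n$.
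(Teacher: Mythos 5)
Your proof is correct and matches the paper's route in substance: the paper does not write out an argument but instead cites Lim's abstract Hodge decomposition (Theorem 5.2.9 of \cite{lim2020hodge}, valid for any linear maps $A,B$ with $AB=0$), and your argument is precisely the standard proof of that lemma specialized to $A=d_n$, $B=d_{n+1}$ --- the two orthogonal splittings $C_n=\Ima d_{n+1}\oplus\ker d_{n+1}^*$ and $C_n=\Ima d_n^*\oplus\ker d_n$, together with the positivity identity $\langle\Delta_n z,z\rangle=\|d_{n+1}^*z\|^2+\|d_n z\|^2$ identifying the harmonic space. In effect you have inlined, correctly and without gaps, the proof the paper outsources to the cited reference.
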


There are many proofs of this in the literature. The paper  \cite{lim2020hodge} by Lim explains how these results can be shown for any two linear maps $A$ and $B$ of finite-dimensional real vector spaces such that $AB=0$, which is satisfied by $d_{n}\circ d_{n+1}=0$. 

\begin{example}\label{ex:combinatorial}
    We use the same complex as in Figure \ref{fig:square_h1} and Example \ref{ex:homology}. The $0$-th Laplacian is
    \begin{align*}
        \Delta_0 &= d_1\circ d_1^*\\
        &= \begin{blockarray}{cccccc}
            & [12] & [13] & [14] & [24] & [34]\\
            \begin{block}{c(ccccc)}
                {[1]} & -1 & -1 & -1 & 0 & 0\\
                {[2]} & 1 & 0 & 0 & -1 & 0\\
                {[3]} & 0 & 1 & 0 & 0 & -1\\
                {[4]} & 0 & 0 & 1 & 1 & 1\\
        \end{block}
        \end{blockarray}\begin{blockarray}{ccccc}
            & [1] & [2] & [3] & [4] \\
            \begin{block}{c(cccc)}
                {[12]}  & -1 & 1  & 0  & 0\\
                {[13]}  & -1 & 0  & 1  & 0\\
                {[14]}  & -1 & 0  & 0  & 1\\
                {[24]}  & 0  & -1 & 0  & 1\\
                {[34]}  & 0  & 0  & -1 & 1\\
        \end{block}
        \end{blockarray}\\
        &= \begin{blockarray}{ccccc}
            & [1] & [2] & [3] & [4] \\
            \begin{block}{c(cccc)}
                {[1]} & 3 & -1 & -1 & -1 \\
                {[2]} & -1 & 2 & 0 & -1\\
                {[3]} & -1 & 0 & 2 & -1 \\
                {[4]} & -1 & -1 & -1 & 3 \\
            \end{block}
        \end{blockarray}
    \end{align*}

    A few observations about $\Delta_0$:
    \begin{itemize}
        \item $\Delta_0$ is the graph Laplacian of the underlying graph: diagonal entries are the degree (number of incident edges) of the vertex. Off-diagonal entries are $-1$ if there is an edge between the two vertices and $0$ otherwise.
        \item $\Delta_0$ is symmetric and positive semi-definite
        \item The eigenvalues of $\Delta_0$ are $\{0,2,4,4\}$
        \item The complex has one component, $\beta_0=1$, and the spectral gap is $\lambda_0 = 1$.
    \end{itemize}

    Next, the $1$-st Laplacian is 
\begin{align*}
        \Delta_1 &= d_2\circ d_2^* + d_1^*\circ d_1\\
        &= \begin{blockarray}{cc}
        & [134]\\
        \begin{block}{c(c)}
            {[12]} & 0\\
            {[13]} & 1\\
            {[14]} & -1\\
            {[24]} & 0\\
            {[34]} & 1\\
        \end{block}
        \end{blockarray}\begin{blockarray}{cccccc}
        & [12] & [13] &[14] & [24] & [34]\\
        \begin{block}{c(ccccc)}
            {[134]} & 0 & 1 & -1 & 0 & 1\\
        \end{block}
        \end{blockarray}\\
        &\phantom{=} + \begin{blockarray}{ccccc}
            & [1] & [2] & [3] & [4] \\
            \begin{block}{c(cccc)}
                {[12]}  & -1 & 1  & 0  & 0\\
                {[13]}  & -1 & 0  & 1  & 0\\
                {[14]}  & -1 & 0  & 0  & 1\\
                {[24]}  & 0  & -1 & 0  & 1\\
                {[34]}  & 0  & 0  & -1 & 1\\
        \end{block}
        \end{blockarray}\begin{blockarray}{cccccc}
            & [12] & [13] & [14] & [24] & [34]\\
            \begin{block}{c(ccccc)}
                {[1]} & -1 & -1 & -1 & 0 & 0\\
                {[2]} & 1 & 0 & 0 & -1 & 0\\
                {[3]} & 0 & 1 & 0 & 0 & -1\\
                {[4]} & 0 & 0 & 1 & 1 & 1\\
        \end{block}
        \end{blockarray}\\
        &= \begin{blockarray}{cccccc}
            & [12] & [13] & [14] & [24] & [34]\\
            \begin{block}{c(ccccc)}
                {[12]} & 0 & 0 & 0 & 0 & 0\\
                {[13]} & 0 & 1 & -1 & 0 & 1\\
                {[14]} & 0 & -1 & 1 & 0 & -1\\
                {[24]} & 0 & 0 & 0 & 0 & 0\\
                {[34]} & 0 & 1 & -1 & 0 & 1\\
        \end{block}
        \end{blockarray} + \begin{blockarray}{cccccc}
            & [12] & [13] & [14] & [24] & [34]\\
            \begin{block}{c(ccccc)}
                {[12]} & 2 & 1 & 1 & -1 & 0\\
                {[13]} & 1 & 2 & 1 & 0 & -1\\
                {[14]} & 1 & 1 & 2 & 1 & 1\\
                {[24]} & -1 & 0 & 1 & 2 & 1\\
                {[34]} & 0 & -1 & 1 & 1 & 2\\
        \end{block}
        \end{blockarray}\\
        &= \begin{blockarray}{cccccc}
            & [12] & [13] & [14] & [24] & [34]\\
            \begin{block}{c(ccccc)}
                {[12]} & 2 & 1 & 1 & -1 & 0\\
                {[13]} & 1 & 3 & 0 & 0 & 0\\
                {[14]} & 1 & 0 & 3 & 1 & 0\\
                {[24]} & -1 & 0 & 1 & 2 & 1\\
                {[34]} & 0 & 0 & 0 & 1 & 3\\
        \end{block}
        \end{blockarray}
    \end{align*}

    A few observations about $\Delta_1$:
    \begin{itemize}
        \item $\Delta_1$ is symmetric and positive semi-definite
        \item The eigenvalues of $\Delta_1^{\textrm{up}}$ are $\{0,0,0,0,3\}$ and the eigenvalues of $\Delta_1^{\textrm{down}}$ are $\{0,0,2,4,4\}$.
        \item The nonzero eigenvalues of $\Delta_1^{\textrm{down}}$ are the nonzero eigenvalues of $\Delta_0^{\textrm{up}}$, $\{2,4,4\}$.
        \item The eigenvalues of $\Delta_1$ are $\{0,2,3,4,4\}$.
        \item If $\lambda$ is an eigenvalue of $\Delta_1$, then $\lambda$ is an eigenvalue of $\Delta_1^{\textrm{down}}$ or $\Delta_1^{\textrm{down}}$.
        \item The complex has one nontrivial cycle, $\beta_1 =1$, and the spectral gap is $\lambda_1=2$.
    \end{itemize}
\end{example}

The combinatorial Laplacian generalizes the graph Laplacian, which is often studied by its eigenvalues and eigenvectors. The same is done with the combinatorial Laplacian's eigenvalues. For a particular notion of normalization on a weighted simplicial complex, Horak and Jost \cite{horak2013spectra} were able to bound the eigenvalues of $\Delta_n, \Delta_n^{up},$ and $\Delta_n^{down}$. Grande and Schaub \cite{grande2024disentangling} showed that eigenvectors of combinatorial Laplacians for an $\alpha$-complex capture geometric notions of curl and flow and can be used for topological spectral clustering. Parzanchevski and Rosenthal \cite{parzanchevski2017simplicial} show that combinatorial Laplacian eigenvalues control the convergence of random walks in arbitrary dimensions. Jones and Wei introduced a similar structure for Khovanov homology in knot theory \cite{jones2025khovanov}. This is, the combinatorial Laplacian is of significant independent interest. We seek to study similar aspects of data through a multi-scale lens, and hence we turn toward persistence. 

\subsection{Persistent Laplacian}\label{subsec:persistence}
Persistence is a central notion in TDA, which quantifies how some quality of data changes at different scales in time, space, or some other parameter. There are formalizations of this, including that of persistence modules for persistent homology \cite{zomorodian_computing_2005}  and Laplacian trees for persistent Laplacians \cite{liu_algebraic_2024}. These are often used in proving stability results. To allow a wider audience to access and understand persistent Laplacians, we will use simpler notions around persistence, and remark in places where a more general theory may be of interest. We begin by explaining what we mean by viewing data at different scales.

\begin{definition}
    A \textbf{filtration} of simplicial complexes is a collection of nested simplicial complexes $K=\{K^\alpha\}_{\alpha\in A}$, where $A\subset \mathbb{R}$ and if $a\leq b$ then $K^a\subset K^b$. For each $n$, there are inclusion maps $i_n^{a,b}:K_n^a\hookrightarrow K_n^b$.
\end{definition}

A filtration can also be viewed as functors from a poset category. Among the most common filtrations in TDA are the Vietoris-Rips filtration and the Alpha filtration, which create a filtration of simplicial complexes based on the distances between points in $\mathbb{R}^n$. More details on these filtrations are in Appendix \ref{appendix:complexes}. At each filtration level $K^a$, the simplicial complex $K^a$ has a chain complex $(C_\bullet^a,d_\bullet^a)$, and each inclusion map $i_{n}^{a,b}:K_n^a\hookrightarrow K_n^b$ induces a map $i_{n}^{a,b}:C_n^a\hookrightarrow C_n^b$ on the chain complexes,
\begin{center}
% https://tikzcd.yichuanshen.de/#N4Igdg9gJgpgziAXAbVABwnAlgFyxMJZABgBpiBdUkANwEMAbAVxiRAGEB9YMAagEYAvgD06IQaXSZc+QigBM5KrUYs2XMKPGSQGbHgJEALEur1mrRB25gAtEK0Sp+2UTL9l5tVa48BIgCNtZxlDBVIPM1VLa00gp11pAzlkE0iVC3Ube0DxZRgoAHN4IlAAMwAnCABbJDIQHAgkAGYozKssG39hYDpSAMEQagY6AJgGAAUk1ysGGDKcIYa6LAY2AAsICABrJbh1rAWkHJ1KmrrqRqR+Nu8QKC6HMWHR8amXMJA5o4Sz2sQbg0mohFBk7g8eCJnl9XpNpp9votqPtDotECdylV-oCrogTGCYp1NL1+oMXmM4R85F95kjlqsNltdsiDkd0UJTlikKDcQBWW6E7IOEkDJYjCnvULUxFLHArNZWTY7PastEYkB-FqXYH4rwxCF8BxBclveHS2nBDVcvHapD8glsB5xMWwyXJNgywQUQRAA
\begin{tikzcd}
C_{n+1}^a \arrow[d, "{i_{n+1}^{a,b}}", hook, shift left] \arrow[rr, "d_{n+1}^a"] &  & C_n^a \arrow[rr, "d_{n}^a", shift left] \arrow[d, "{i_n^{a,b}}", hook, shift left] &  & C_{n-1}^a \arrow[d, "{i_{n-1}^{a,b}}", hook, shift left] \\
C_{n+1}^b \arrow[rr, "d_{n+1}^b"]                                                &  & C_n^b \arrow[rr, "d_n^b"]                                                          &  & C_{n-1}^b                                               
\end{tikzcd}
\end{center}

These inclusions on the chain complex induce maps $i_n^{a,b}:H_n^{a}(K)\to H_n^b(K)$ between the corresponding homologies of $K^a$ and $K^b$. These are not necessarily inclusions, which is measured by persistent homology. 
\begin{definition}
    The \textbf{$n$-th $(a,b)$-persistent homology} with coefficients in $\mathbb{K}$ of a filtration $\{K^\alpha\}_{\alpha \in A}$ of simplicial complexes is
    \[
        H_n^{a,b}(K;\mathbb{K}) = \Ima\left(i_n^{a,b}:H_n^a(K;\mathbb{K})\to H_n^b(K;\mathbb{K})\right).
    \]

    We call $\beta_n^{a,b}=\dim H_n^{a,b}$ the \textbf{$n$-th $(a,b)$-persistent Betti number.} 
\end{definition}

Persistent homology is often studied via the general notion of persistence modules, which are uniquely decomposed into a direct sum of interval modules $\mathbb{K}[a,b)$ and $\mathbb{K}[a,\infty)$ \cite{zomorodian_computing_2005,bubenik2021homological}. Each interval module $\mathbb{K}[a,b)$ corresponds to a cycle that first appears (``born") in $H_n^{a}$ and first becomes a boundary (``dies") in $H_n^b$, and each $\mathbb{K}[a,\infty)$ corresponds to a cycle born in $H_n^a$ and that does not die in filtration. The intervals are visualized in two main ways. First, as a \textbf{persistence diagram}, where a point $(a,b)$ is plotted in $\mathbb{R}^2$ for each $[a,b)$ and $[a,\infty)$ is represented by a point with $x$-coordinate $a$ at a designated space above the rest of the graph. Alternatively, the interval decomposition can be plotted as line segments of a \textbf{barcode}. The segments corresponding to $\mathbb{K}[a,b)$ are placed above the horizontal axis from $x=a$ to $x=b$ and $\mathbb{K}[a,b)$ is denoted by a segment from $x=a$ to the end of the graph. The vertical axis is used only for indexing, sometimes ordered by earliest birth. These representations can be vectorized to form machine learning features in a variety of ways, including persistence landscapes \cite{persistenceLandscapes2015Bubenik} and persistence images \cite{adams_persistence_image}.

Now we use the notions of persistence and filtrations to form persistent topological Laplacians. We will assume $\mathbb{K}=\mathbb{R}$ for the remainder of this article. We will use constructions and arguments previously described in \cite{wang2020PSG, memoli_PL, gulen_et_al:LIPIcs.SoCG.2023.37}. In the same manner as in Subsection \ref{subsec:combinatorial}, we can give each chain group $C_n^a$ an inner product, and each boundary map $d_n^a:C_n^a\to C_{n-1}^a$ has an adjoint $\left(d_n^a\right)^*:C_{n-1}^a\to C_n^a$. 

We would like to construct a persistent Laplacian that is a self-map of a particular chain group, $\Delta_n^{a,b}:C_n^a\to C_n^a$, such that $\ker\Delta_n^{a,b}\cong H_n^{a,b}(K;\mathbb{R})$. We must incorporate information about $K^b$ and $(C_\bullet^b,d_\bullet^b)$. A natural approach is to want some sort of map $C_{n+1}^b\to C_n^a$ capturing the information of $d_{n+1}^b$. It is possible that $\sigma\in C_{n+1}^b$ and $d_{n+1}^b(\sigma)\in C_n^b\setminus C_n^a$, as in example \ref{ex:nontrivial_PL}. 

\begin{figure}
    \centering
    \includegraphics[width=4.0cm]{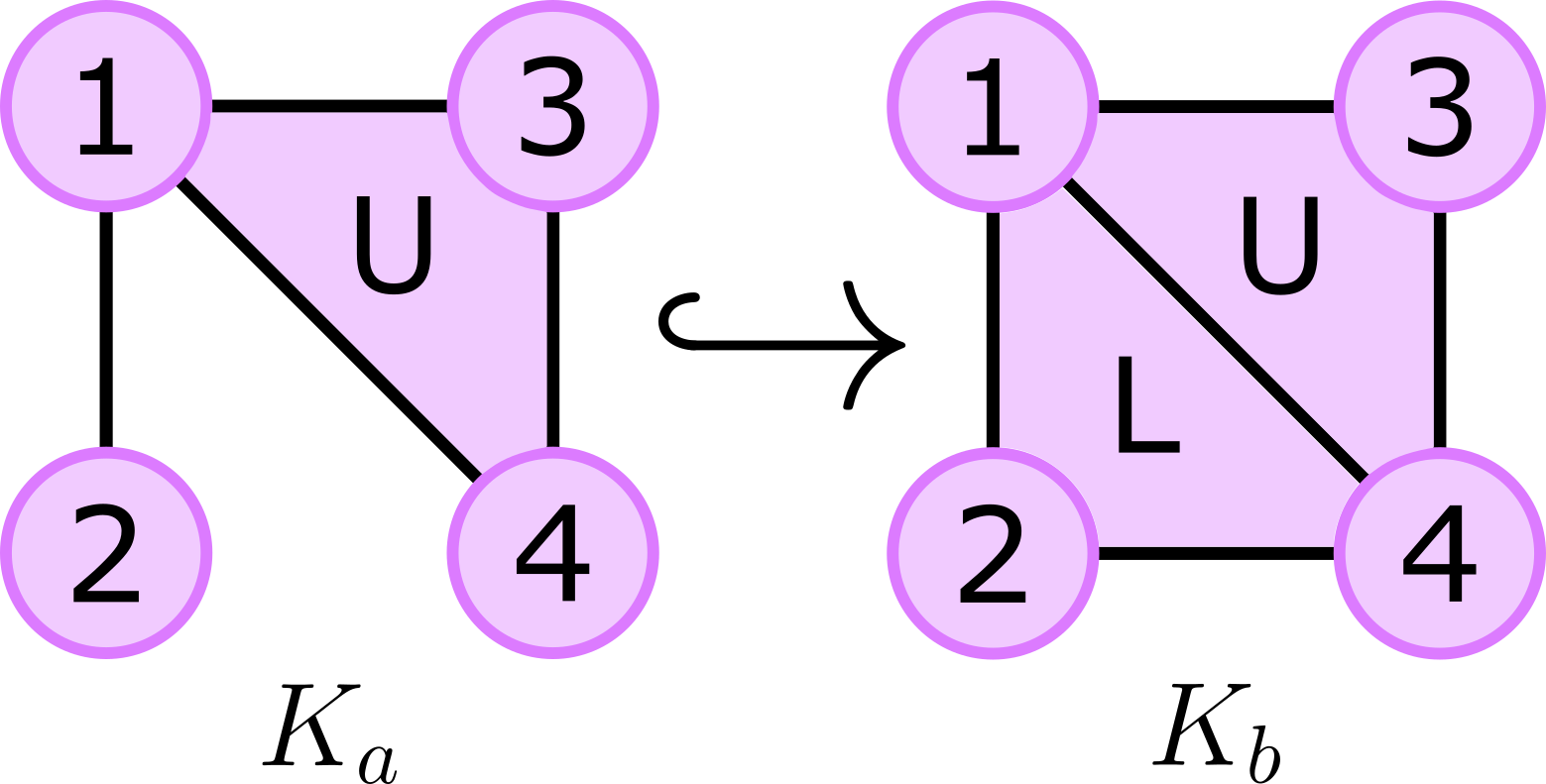}
    \caption{A filtration step of simplicial complexes where restricting $d_n^b$ to $C_n^{a,b}$ is not as simple as removing rows or columns.}
    \label{fig:nontrivial_PL}
\end{figure}

\begin{example}\label{ex:nontrivial_PL}
    Consider the filtration $K_a\subset K_b$ in Figure \ref{fig:nontrivial_PL}. Then $d_2^b([124])=[12]-[14]+[24]\in C_1^b\setminus C_1^a$. 
\end{example}

We must modify $d_{n+1}^b$ in such a way that its image lands in $C_n^a$, in order to obtain a map to $C_n^a$. To do this, we restrict ourselves to the subset of chains in $C_{n+1}^b$ with boundary contained entirely in (the image under inclusion of) $C_n^a$ \cite{gulen_et_al:LIPIcs.SoCG.2023.37}:

\[C_{n+1}^{a,b}=\{\sigma\in C_{n+1}^b | d_{n+1}\sigma \in  i_n^{a,b}(C_n^a)\}.\]

This is an inner-product subspace of $C_{n+1}^b$, but critically a basis of $C_{n+1}^{a,b}$ is not necessarily a restriction of a basis of $C_{n+1}^b$. This will be the source of much computational complexity. We can define the restriction of $d_{n+1}^b$ to $C_{n+1}^{a,b}$ as the persistent boundary map $d_{n+1}^{a,b}:C_{n+1}^{a,b}\to C_{n}^a$. As $C_{n+1}^{a,b}$ is a subspace of $C_{n}^b$, it inherits the existence of an inner product, but due to the mismatch of bases it may be represented differently. Still, there is an adjoint $\left(d_{n+1}^{a,b}\right)^*:C_n^a\to C_{n+1}^{a,b}$. These ingredients are placed in context in the following diagram:

\begin{center}
% % https://tex.stackexchange.com/questions/230943/beamer-tikz-cd-uncover-does-not-work
% % https://tikzcd.yichuanshen.de/#N4Igdg9gJgpgziAXAbVABwnAlgFyxMJZABgBpiBdUkANwEMAbAVxiRAGEB9YAawGoAjAF8AenRBDS6TLnyEUAJnJVajFmy48xEqSAzY8BIgBZl1es1aIO3HgFph2ydINyiA0gJUX11rr0FRYDpSACMhHRdZIxQyBW81KxsAx1DIvRlDeWQlePNEjU4tNOcM1xjkUzzVS0LeB1ESlRgoAHN4IlAAMwAnCABbJDIQHAgkUxrfECxbQJFgsIjqBjpQmAYABUy3awYYLpwQahw6LAY2AAsICB4jkDgLrAOkBt1egaHjscQPSaSoWaOcTLVbrLbleQgPbPUrvQY-L5IJR-NgA3iiYFQ0GbbYxKH7Q7UB5PQ6IV7dPrw36jJAAZnytWsAB0mdCcAAKNH8RwLcIsnpYVoXHAAShEACo7is1jiIWw2ek4Uhqd8AKwMqYzLS8pZYmXg6KQhXHU7naxXG53YnPMnCN6UpGIhEo5msgmcwEY-mC4ViyUg-W4o0Eq2PG3kkBKxDImmIABsGqSM3qPJC4Sl2INWXlIZNZ0u11uRLDpIjUfpI2+Ex8SRZ+BOnvmad10rBQbYAqFhJGpoLlthDsQFdjvxrqMbOozgblu1z9xLLwUA4+iAmsfVLpAXLmaQDbZn+Jh9pXG9jCc3AOKU-3hpzMIoQiAA
\begin{tikzcd}[column sep=huge, row sep=large]
C_{n+1}^a \arrow[dd, "{i_{n+1}^{a,b}}", hook, shift left] \arrow[rr, "d_{n+1}^a"] &                                                                                                      & C_n^a \arrow[rr, "d_{n}^a", shift left]\arrow[rr, "d_{n}^a", shift left, blue] \arrow[ld, "\left(d_{n+1}^{a,b}\right)^*",red] \arrow[dd, "{i_n^{a,b}}", hook, shift left] &  & C_{n-1}^a \arrow[ll, "\left(d_{n}^a\right)^*", shift left, blue] \arrow[ll, "\left(d_{n}^a\right)^*", shift left]\arrow[ll, "\left(d_{n}^a\right)^*", shift left, blue]  \arrow[dd, "{i_{n-1}^{a,b}}", hook, shift left] \\
                                  & {\textcolor{red}{C_{n+1}^{a,b}}}  \arrow[ld, "{\iota_{n+1}^{a,b}}"', hook] \arrow[ru, "{d_{n+1}^{a,b}}", shift left=2, red] \arrow[ru, "{d_{n+1}^{a,b}}", shift left=2] \arrow[ru, "{d_{n+1}^{a,b}}", shift left=2, red] &                                                                                                                                  &  &                                                                                                              \\
C_{n+1}^b \arrow[rr, "d_{n+1}^b"]                                                 &                                                                                                      & C_n^b \arrow[rr, "d_n^b"]                                                                                                        &  & C_{n-1}^b                                                                                                   
\end{tikzcd}
\end{center} 

The maps $d_n^a$ and $\left(d_n^a\right)^*$ are of the same construction as in the standard combinatorial Laplacian of the simplicial complex $C_n^a$. This peculiarity is addressed in \cite{gulen_et_al:LIPIcs.SoCG.2023.37}, where the authors restrict $d_n^a$ to a set $C_{n-1}^{b,a}\subset C_{n-1}^a$, which is analogous in definition to $C_{n+1}^{a,b}$ for the down-Laplacian. The authors show that $C_{n-1}^{b,a}=C_{n-1}^a,$ and hence the necessary restriction of $\left(d_n^a\right)^*$ is not a restriction at all. 

Now in the same way as before, compositions $\Delta_n^{a,\text{up}}=d_{n+1}^{a,b}\circ \left(d_{n+1}^{a,b}\right)^*$ and $\Delta_n^{a,\text{down}}=\left(d_n^a\right)^*\circ d_n^a$ are self-maps of $C_n^a:$

\begin{center}        
    \begin{tikzcd}
    \arrow[loop left, "\Delta_{n,\text{up}}^{a,b}=d_{n+1}^{a,b}\circ \left(d_{n+1}^{a,b}\right)^*", red] C_n^a \arrow[loop right, "\Delta_{n,\text{down}}^{a}=\left(d_n^a\right)^*\circ d_n^a", blue]
    \end{tikzcd},
\end{center}

which can again be added together. This brings us to the main definition of this work.

\begin{definition}
    The \textbf{$n$-th $(a,b)$-persistent Laplacian} of a filtration $\{K^\alpha\}_{\alpha\in A}$ of simplicial complexes is $\Delta_n^{a,b}:C_n^a\to C_n^a$, given by
    \begin{align*}
        \Delta_n^{a,b}&=d_{n+1}^{a,b}\circ\left(d_{n+1}^{a,b}\right)^* + d_n^a\left(d_n^a\right)^*\\
        &= \Delta_{n,\text{up}}^{a,b} + \Delta_{n,\text{down}}^{a}.  
    \end{align*}
\end{definition}

This has the following key properties.

\begin{theorem} For a filtration $K=\{K_a\}_{a\in A}$ of finite simplicial complexes and a persistent Laplacian $\Delta_n^{a,b}$: 
\begin{enumerate}
    \item $\Delta_n^{a,b}$ is self-adjoint
    \item $\ker \Delta_n^{a,b}\cong H_n^{a,b}(K;\mathbb{R})$
    \item $C_n^a(K;\mathbb{R})= \Ima\left(d_n^a\right)^*\oplus \ker \Delta_n^{a,b}\oplus \Ima d_{n+1}^{a,b}$
    \item (Example 2.3.3 in \cite{memoli_PL}) $\Delta_n^{a,a}=\Delta_n^a$.
\end{enumerate}
\end{theorem}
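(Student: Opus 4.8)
The plan is to treat the four parts in order of increasing depth, reducing the substantive content (parts~2 and~3) to Lim's abstract Hodge-decomposition result already cited in \cite{lim2020hodge}. Part~1 is immediate: each summand of $\Delta_n^{a,b}=d_{n+1}^{a,b}\left(d_{n+1}^{a,b}\right)^*+\left(d_n^a\right)^*d_n^a$ has the form $MM^*$ or $M^*M$, hence is self-adjoint, and a sum of self-adjoint operators is self-adjoint.

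For parts~2 and~3 I would invoke Lim's theorem applied to the two-step sequence
\[
C_{n+1}^{a,b}\xrightarrow{\,d_{n+1}^{a,b}\,}C_n^a\xrightarrow{\,d_n^a\,}C_{n-1}^a .
\]
The single hypothesis to check is that the composite vanishes. This holds because $d_{n+1}^{a,b}$ is by construction the restriction of $d_{n+1}^b$ with image inside $C_n^a$, and the boundary formula makes $d_n^a$ agree with $d_n^b$ on $C_n^a$; thus for any $\sigma\in C_{n+1}^{a,b}$ we get $d_n^a\,d_{n+1}^{a,b}\sigma=d_n^b\,d_{n+1}^b\sigma=0$. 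Applying Lim's result with $A=d_n^a$ and $B=d_{n+1}^{a,b}$, so that $AB$ is exactly this vanishing composite, delivers part~3 verbatim as the orthogonal decomposition $C_n^a=\Ima\left(d_n^a\right)^*\oplus\ker\Delta_n^{a,b}\oplus\Ima d_{n+1}^{a,b}$, and simultaneously the isomorphism $\ker\Delta_n^{a,b}\cong\ker d_n^a/\Ima d_{n+1}^{a,b}$.

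It remains, for part~2, to identify this quotient with $H_n^{a,b}=\Ima\left(i_n^{a,b}\colon H_n^a\to H_n^b\right)$, and this is where I expect the main obstacle to lie, since it is the only place the nonstandard basis of $C_{n+1}^{a,b}$ genuinely enters. The crux is the identity
\[
\Ima d_{n+1}^{a,b}=\ker d_n^a\cap\Ima d_{n+1}^b ,
\]
that the image of the persistent boundary map is exactly the $a$-cycles that are $b$-boundaries. The inclusion $\subseteq$ holds since every $d_{n+1}^{a,b}\sigma$ is a $b$-boundary and, by the vanishing composite, an $a$-cycle; the reverse inclusion is precisely where the definition of $C_{n+1}^{a,b}$ is used, because an $a$-cycle of the form $d_{n+1}^b\sigma$ forces $d_{n+1}^b\sigma\in C_n^a$, hence $\sigma\in C_{n+1}^{a,b}$. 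Granting this, I would combine it with the standard observation that the kernel of the composite $\ker d_n^a\hookrightarrow\ker d_n^b\twoheadrightarrow H_n^b$ is exactly $\ker d_n^a\cap\Ima d_{n+1}^b$, which gives $\Ima i_n^{a,b}\cong\ker d_n^a/(\ker d_n^a\cap\Ima d_{n+1}^b)=\ker d_n^a/\Ima d_{n+1}^{a,b}\cong\ker\Delta_n^{a,b}$.

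Finally, part~4 is the degenerate case $b=a$: the defining condition $d_{n+1}\sigma\in i_n^{a,a}(C_n^a)$ is then automatic for every $\sigma\in C_{n+1}^a$, so $C_{n+1}^{a,a}=C_{n+1}^a$ and $d_{n+1}^{a,a}=d_{n+1}^a$, and the definition collapses to $\Delta_n^{a,a}=d_{n+1}^a\left(d_{n+1}^a\right)^*+\left(d_n^a\right)^*d_n^a=\Delta_n^a$.
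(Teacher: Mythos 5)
Your proposal is correct and takes essentially the same route as the paper, which proves part~1 directly from the definition, obtains parts~2 and~3 by applying Lim's lemma (Theorem 5.2.9 of \cite{lim2020hodge}) with $A=d_n^a$ and $B=d_{n+1}^{a,b}$ as in the proof of Theorem 2.7 of \cite{memoli_PL}, and cites Example 2.3.3 of \cite{memoli_PL} for part~4. The only difference is that you spell out the details the paper delegates to those citations --- the verification $d_n^a\,d_{n+1}^{a,b}=0$, the identity $\Ima d_{n+1}^{a,b}=\ker d_n^a\cap \Ima d_{n+1}^b$, and the collapse $C_{n+1}^{a,a}=C_{n+1}^a$ --- all of which are correct.
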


Property 1 is immediate from the definition of $\Delta_n^{a,b}$. Property 2 is shown in the proof of Theorem 2.7 in \cite{memoli_PL}. Although Theorem 2.7 itself only asserts the dimensions, the authors prove this by showing this isomorphism is a direct application of Theorems 5.2 and 5.3 in \cite{lim2020hodge}. Property 3 is another straightforward application of Theorem 5.2 in \cite{lim2020hodge}, and was stated in \cite{Lieutier_PersistentHarmonicForms2014}.

\begin{lemma} (Theorem 5.2.9 in \cite{lim2020hodge})
Let $A\in \mathbb{R}^{m\times n}$ and $B\in\mathbb{R}^{n\times p}$ with $AB=0$. Then $\mathbb{R}^n=\Ima (A^*)\oplus \ker(A^*A + BB^*)\oplus \Ima (B)$.
\end{lemma}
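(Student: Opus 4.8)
The plan is to exhibit the three summands as mutually orthogonal subspaces whose sum exhausts $\mathbb{R}^n$, relying on the fundamental theorem of linear algebra in the form $(\Ima M)^{\perp} = \ker(M^*)$, together with the identity $(U+V)^{\perp} = U^{\perp}\cap V^{\perp}$ for subspaces $U,V$. The hypothesis $AB = 0$ (equivalently its adjoint $B^*A^* = 0$) will force $\Ima(A^*)$ and $\Ima(B)$ to be orthogonal, and the positive semi-definiteness of $A^*A + BB^*$ will pin down the middle summand. To see the first orthogonality, for any $x\in\mathbb{R}^m$ and $y\in\mathbb{R}^p$ I compute $\langle A^*x, By\rangle = \langle x, ABy\rangle = 0$, which uses exactly $AB = 0$; hence $\Ima(A^*)\perp\Ima(B)$ and their sum is an internal orthogonal direct sum $\Ima(A^*)\oplus\Ima(B)$.

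Next I would identify the middle term. The crucial observation is that $A^*A + BB^*$ is positive semi-definite: for every $v\in\mathbb{R}^n$,
\[
\langle (A^*A + BB^*)v, v\rangle = \langle Av, Av\rangle + \langle B^*v, B^*v\rangle = \|Av\|^2 + \|B^*v\|^2 \ge 0.
\]
Consequently $(A^*A + BB^*)v = 0$ forces $\|Av\|^2 + \|B^*v\|^2 = 0$, so $Av = 0$ and $B^*v = 0$; the reverse inclusion is immediate. This gives the key identity $\ker(A^*A + BB^*) = \ker A \cap \ker(B^*)$.

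Finally I would assemble the decomposition. Rewriting the kernel identity via $\ker A = (\Ima A^*)^{\perp}$ and $\ker(B^*) = (\Ima B)^{\perp}$ yields $\ker(A^*A + BB^*) = (\Ima A^*)^{\perp}\cap(\Ima B)^{\perp} = (\Ima(A^*) + \Ima(B))^{\perp}$, so the middle summand is exactly the orthogonal complement of $\Ima(A^*)+\Ima(B)$ in $\mathbb{R}^n$. Thus $\mathbb{R}^n = (\Ima(A^*)+\Ima(B)) \oplus \ker(A^*A+BB^*)$, and combining this with the orthogonal splitting $\Ima(A^*)\oplus\Ima(B)$ from the first step produces the claimed three-fold orthogonal decomposition. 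The only genuinely nontrivial step is the characterization of $\ker(A^*A + BB^*)$ through the norm identity; everything else is routine bookkeeping with orthogonal complements, so I expect that positive-semidefinite argument to be the crux.
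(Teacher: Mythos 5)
Your proof is correct and takes essentially the same route as the argument the paper relies on (the proof of Theorem 5.2.9 in \cite{lim2020hodge}, which the paper cites rather than reproduces): orthogonality of $\Ima(A^*)$ and $\Ima(B)$ from $AB=0$, the identification $\ker(A^*A+BB^*)=\ker A\cap\ker B^*$ via the norm identity $\langle(A^*A+BB^*)v,v\rangle=\|Av\|^2+\|B^*v\|^2$, and the fundamental-theorem complements $\ker A=(\Ima A^*)^{\perp}$, $\ker B^*=(\Ima B)^{\perp}$. You have correctly identified the positive-semidefinite kernel characterization as the crux; no gaps.
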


Applying this to $A=d_n^a, B=d_{n+1}^{a,b}$ gives the persistent Hodge decomposition. A clearly explained example computation of a persistent Laplacian must be delayed until we have established the tools of subsection \ref{subsec:computation}, but here we can show the code to produce a persistent Laplacian and its eigenvalues for the filtration in Figure \ref{fig:nontrivial_PL}:

\begin{lstlisting}
import gudhi as gd
import petls

st = gd.SimplexTree()
st.insert([1, 3, 4], filtration=0.0)
st.insert([1, 2], filtration=0.0,
st.insert([2, 4], filtration=0.0)
st.insert([1, 2, 4], filtration=1.0)

complex = petls.Complex(simplex_tree=st)
print(complex.spectra(dim=1, a=0, b=1))
\end{lstlisting}

This prints eigenvalues $[2, 2, 4, 4, 4]$.

One notable exception to the standard framework for constructing PTLs is the Persistent Sheaf Laplacian (PSL)\cite{wei_sheaf_2025}, which can encode domain-informed information. Its construction mirrors many aspects of the PTL for simplicial complexes. A cellular sheaf attaches vector spaces and linear maps to a cell complex. In our case, we will attach copies of $\mathbb{R}$ to the simplices of a simplicial complex. These vector spaces and the restriction maps can be used to model opinion dynamics \cite{hansen2019toward}, sensor networks \cite{curry2013sheaves}, and molecular interactions \cite{wei_sheaf_2025}, mainly through sheaf cohomology and persistent or spectral variations. Appendix \ref{appendix:sheaf} gives an overview of formalizing these notions. More thorough treatments can be found in \cite{curry2013sheaves,wei_sheaf_2025,hansen2019toward}.

The meaning of the zero-eigenvalues of a persistent Laplacian is well understood as the persistent Betti numbers, but there is no explicit theoretical characterization of the nonzero eigenvalues of persistent Laplacians in terms of the geometry of the complex, so explicit interpretation of values is an important open challenge. Cheeger-type inequalities have been established to bound the least nonzero eigenvalues of persistent up-Laplacians with geometric and combinatorial structure for $\Delta_0^{a,b}$ \cite{memoli_PL} and for $\Delta_i^{a,b}$, where $i>0$ \cite{dong2024faster}. Various stability results have been found for the eigenvalues of PTLs, supporting their use as a tame descriptor \cite{memoli_PL,dong2024faster,liu_algebraic_2024}. There are several examples where changes in the nonzero eigenvalues of $\Delta_i^{a,b}$ correspond to structure not detected by persistent homology \cite{wei_Survey,wang2020PSG}. Finally, ablation studies have demonstrated that the persistent Laplacian contributes more predictive information than non-persistent combinatorial Laplacians alone \cite{bhusal2024persistent}. These observations contribute to the theme that PTL eigenvalues detect meaningful changes beyond homology, persistent homology, and combinatorial Laplacians, even if we must currently rely on qualitative descriptions or outsource the interpretation to machine learning methods. 

After stability and interpretation, the next natural question is the use of real coefficients. In computational topology, fast computation is often deeply related to the use of coefficients in $\mathbb{Z}_2$, so it is natural to consider removing the requirement for $\mathbb{R}$ coefficients as a route to improving (persistent) Laplacian computations. In fact, $\mathbb{R}$ is not required: one can take coefficients in the inner product space $\mathbb{C}$, although this does not offer direct benefit, since $\Delta_n$ and its eigenvalues will only have real coefficients. We could also use $\mathbb{Q}$, where we lose the fact that the eigenvalues are in the same field as our coefficients. Even further, one can relax the notion of an inner product to a symmetric bilinear form, allowing us to consider finite fields $\mathbb{Z}_p$. We retain definitions of adjoints and a Laplacian operator; however, we sometimes lose important theoretical guarantees related to the Hodge decomposition and harmonic representation as classified in \cite{catanzaro2023harmonic}. This notion is useful for studying harmonic representatives, but cannot be used for a fixed prime order such as $\mathbb{Z}_2$. Moreover, eigenvalues of these Laplacians are roots of polynomials over a finite field $\mathbb{F_q}$, which lie in a field extension $\mathbb{F_{q^n}}$, rather than $\mathbb{R}$. This restricts the use of standard ways of analyzing Laplacian eigenvalues. It is therefore most natural to consider real coefficients.

\subsection{Computation of persistent up-Laplacian}\label{subsec:computation}
Since $\Delta_{n,\mathrm{down}}^{a,b}$ is a straightforward and efficient matrix multiplication, the computational literature focuses on up-Laplacians, which we review here. We adapt the notations of the other articles to match this work.

The introduction of the persistent Laplacian was by Wang et al. in \cite{wang2020PSG}, and the first software implementation was HERMES by Wang et. al in \cite{wang2021hermes}. HERMES was built specifically for Rips and Alpha filtrations and persistent Laplacians in dimensions $0,1$, and $2$. 
HERMES constructed the map $d_{n+1}^{a,b}$, expressed as a matrix $B_{n+1}^{a,b}$, by first constructing a projection matrix $\mathbb{P}^{a,b}_{n+1}:C_{n+1}^{b}\to C_{n+1}^{a,b}$. They used a gauge method from \cite{zhao_3d_2019} for efficiency in both space and time in constructing this projection. The standard projection matrix $P_{n}^{a}:C_n^b\to C_n^a$ was also constructed. Then $B_{n+1}^{a,b}=P_n^a B_{n+1}^b \left(\mathbb{P}_{n+1}^{a,b}\right)^T$. The persistent Laplacian matrix was then assembled following the definition, $L_{n}^{a,b}=B_{n+1}^{a,b}\left(B_{n+1}^{a,b}\right)^T + \left(B_{n}^{a}\right)^TB_{n}^{a}$.
$O(\left(n_{n+1}^b\right)^3)$.

The authors in \cite{memoli_PL} propose multiple new algorithms for computing the persistent Laplacian of weighted simplicial complexes. We modify their notations to ours, and reduce to the unweighted case. The first algorithm of \cite{memoli_PL} does a column reduction on the rows of $B_{n+1}^b$ that are not in $B_{n+1}^a$ to find $Y$ and setting $R_{n+1}^b = B_{n+1}^b Y$. Then $R_{n+1}^b$ is column reduced (the index of each column's lowest nonzero element is unique). Then $B_{n+1}^{a,b}$ is set as a certain submatrix of $B_{n+1}^{b}Y$, $Z$ is computed as a basis for $C_{n+1}^{a,b}$, and $\Delta_{n,\mathrm{up}}^{a,b}=B_{n+1}^{a,b}\left(Z^TZ\right)^{-1}\left(B_{n+1}^{a,b}\right)^T$.

The second algorithm of \cite{memoli_PL} uses a Schur complement to compute $\Delta_{n,\mathrm{up}}^{a,b}$ without computing $B_{n+1}^{a,b}$. For a square block matrix $M=\begin{pmatrix}
    A & B\\
    C & D
\end{pmatrix}$, where $D$ is square, the Schur complement of $D$ in $M$ is $M/D=A-BD^\dagger C$, where $D^\dagger$ is the Moore-Penrose pseudoinverse of $D$. It is shown that $\Delta_{n,\mathrm{up}}^{a,b}$ can be computed as the particular Schur complement of $\Delta_{n,\mathrm{up}}^b$ where $A = \Delta_{n,\mathrm{up}}^a$ is the top left corner of the matrix. The details are in Algorithm \ref{alg:PL_MWW_2}. For a matrix $X$, let $X(a:b,c:d)$ denote the submatrix of $X$ consisting of rows $a$ through $b$ and columns $c$ through $d$, inclusive.

\begin{algorithm}
 \caption{Persistent Laplacian: Algorithm 2 of \cite{memoli_PL}.}
 \label{alg:PL_MWW_2}
 \begin{algorithmic}[1]
 \STATE \textbf{Data:} $B_n^a,B_{n+1}^b$
 \STATE \textbf{Result:} $\Delta_n^{a,b}$
  \STATE Compute $\Delta_{n,\mathrm{down}}^a$ from $B_n^a$
 \STATE Compute $\Delta_{n,\mathrm{up}}^b$ from $B_{n+1}^b$
 \IF{$n_n^a==n_n^b$}
 \RETURN $ \Delta_{n,\mathrm{up}}^b+\Delta_{n,\mathrm{down}}^a$
 \ENDIF
 \STATE A = $\Delta_{n,\mathrm{up}}^b(0:n_n^a,0:n_n^a)$
 \STATE B = $\Delta_{n,\mathrm{up}}^b(0:n_n^a,n_n^a+1:n_n^b)$
 \STATE C = $B^T$
 \STATE D = $\Delta_{n,\mathrm{up}}^b(n_n^a+1:n_n^b,n_n^a+1:n_n^b)$
\STATE $\Delta_{n,\mathrm{up}}^{a,b} = A-BD^{-1}C$ 
 \STATE{\RETURN $\Delta_{n,\mathrm{up}}^{a,b}+\Delta_{n,\mathrm{down}}^a$}
\end{algorithmic}
\end{algorithm}

An algorithm to compute $\Delta_{n,\mathrm{up}}^{a,b}$ for a specific class of complexes was introduced in \cite{dong2024faster}. The complex is assumed to be non-branching, meaning each simplex has at most $2$ co-faces. This includes $\alpha$-complexes and cubical complexes, so this restriction is fairly reasonable for practical applications. The complex is also required to have an orientation compatibility condition, where if $\sigma$ is an $n$-simplex simplex that is a face of exactly $2$ $(n+1)$-simplices, the $(n+1)$-simplices have opposite orientation. The algorithm rearranges columns of $B_{n+1}^{b}$ based on a graph or hypergraph constructed from the rows of $B_{n+1}^{b}$ corresponding to simplices in $K_b$ but not in $K_a$.

\section{Software design}
\label{sec:design}

\subsection{Overview}

Researchers interested in persistent Laplacians may want a descriptor of a dataset, they may want to test the efficiency of the algorithm for computing $\Delta_{n,\textrm{up}}^{a,b}$, or they may want to show evidence of a conjecture about the interpretation of persistent Laplacian eigenvalues. Because there are ongoing developments for various aspects of computing PTLs, it is important that they are straightforward to interchange as the literature grows. The Persistent Homology Algorithms Toolbox (PHAT) \cite{bauer2017phat} has filled this role in the persistent Homology literature.

In addition to the philosophical need for a flexible framework, there is a technical factor motivating the need for a framework: the asymptotic time complexity of one up-Laplacian algorithm may be best for one type of complex, while another up-Laplacian algorithm may have the best time complexity for another type of complex \cite{memoli_PL} \cite{dong2024faster}. It is therefore critical that a general tool for computing assorted PTLs does not fix a "best" algorithm.

To maximize efficiency, computations are written in C++ using the fast open source library Eigen \cite{eigenweb}. A Python interface is added through pybind11 \cite{pybind11}, so that it can be used by a wider audience.

The software implementations for both C++ and Python allow one to construct a custom data structure named \texttt{Complex} from a point cloud (Vietoris-Rips or Alpha), a directed graph, a Gudhi simplex tree, or a sequence of boundary matrices and filtrations \cite{gudhi:urm}.

In addition to filtered boundary matrices, the \texttt{Complex} stores the up-Laplacian algorithm and the eigenvalue algorithm as function wrappers. A researcher who develops a new up-Laplacian algorithm needs only to write a single function that takes one filtered boundary matrix and the filtration values, then store a reference to that function in the \texttt{Complex}. The default is the Schur complement algorithm of \cite{memoli_PL}. Similarly, the eigenvalue algorithms can be set to one of several defaults or an external program. The \texttt{Complex} allows one to separately compute $\Delta_{n,\mathrm{up}}^{a,b}$, $\Delta_{n,\mathrm{down}}^{a}$, or $\Delta_{n}^{a,b}$ as desired, as well as directly compute the eigenvalues or eigenvalue-eigenvector pairs of $\Delta_{n}^{a,b}$.

This contributes significantly to the software's role in the development of novel algorithms. One only needs to write a single function that computes the up-Laplacian from a boundary matrix, without needing to address the down-Laplacian, the simplicial complex and boundary maps, or the eigenvalue computation. 

\subsection{Persistent sheaf Laplacian}
A persistent sheaf Laplacian (PSL), as introduced in \cite{wei_sheaf_2025}, is a PTL defined for a cellular sheaf on a simplicial complex. A more detailed description of cellular sheaves is in Appendix \ref{appendix:sheaf}. The PSL is distinct from other PTL construction, largely because the boundary matrix of a standard simplicial complex has coefficients in $\{-1,0,1\}$, but this is not true for the cellular sheaf. 

A cellular sheaf with real stalks has a boundary map that is a linear combination of real-valued linear maps, where each restriction map is a real-valued matrix that can take essentially any real value, as long as they assemble to respect the composition and identity rules. This means that although the boundary map of a simplicial complex can be stored as an integer-valued matrix, a cellular sheaf's boundary matrix must be stored as a real-valued matrix, which is less efficient for both storage and basic operations (e.g., the matrix multiplication for $\Delta_{n,\mathrm{down}}^{a}$).

To create the PSL, we currently restrict ourselves to cellular sheaves with stalks $\mathbb{R}^1$. The cellular sheaf is represented as a \texttt{sheaf\_simplex\_tree}, which wraps a Gudhi Simplex Tree \cite{gudhi:urm} and a function \texttt{restriction}. Since a cellular sheaf is a mathematical structure that can be used to encode many types of rich information, largely conveyed by the restriction function, the restriction function must be written by the user. This function has access to the entire sheaf simplex tree, including optional auxiliary data, and needs to only be explicitly defined in the code for faces of codimension $1$. In other words, only the restriction between a vertex and edge, an edge and $2$-simplex, etc., need to be explicitly implemented. For interpreting the meaning of this computation, it is important to define this restriction in a way that satisfies the composition and identity requirements, despite the fact that there are no technical barriers to the matrix computations that follow.  

Since the only computational difference between the PSL and the standard PTL is the matrix storage, the natural approach would be to define each as an instance of a C++ template. This results in prohibitively long compilation times. Therefore, the PSL is implemented separately, but identically to the standard PL, except for the storage type.

This implementation of a PSL expands on previous ones by allowing for arbitrarily-written restriction functions, as well as most of the applicable performance and flexibility enhancements that the standard Persistent Laplacian implementation provide. Moreover, this construction is primed for generalization to stalks in $\mathbb{R}^d$ for $d>1$, because the process of computing boundary matrices from the restriction map could be modified to produce multiple rows and columns for a single simplex.

\section{Computation}
\label{sec:computation}

We now describe details about how the class \texttt{Complex} computes a persistent topological Laplacian and its eigenvalues. We first describe the default process, which motivates the development of some variations.

\subsection{The default pipeline}

First, a set of filtered boundary matrices is constructed. This can be done externally by the user or by one of several methods in the library that build filtered simplicial complexes. For example, an interface is given to extract these filtered boundary matrices from the C++ program Ripser \cite{bauer2021ripser}, which efficiently builds a Rips filtration from a point cloud or distance matrix. These filtered boundary matrices are stored as sparse matrices with integer coefficients.

The \texttt{Complex} permanently stores only $B_{n}$, the boundary matrices corresponding to the largest filtration value, and we compute other boundary matrices only as submatrices of $B_{n}$. Each nonzero entry in $B_{n}$ is $-1$ or $1$, so we can use integer storage, even though we will eventually use floating point operations.

Note that each column of $B_{n}$ has exactly $(n+1)$ nonzero elements, and so will generally be sparse. Then computing $\Delta_{n,\mathrm{down}}^{a,b}=\left(B_{n}^a\right)^TB_{n}^a$ can be done efficiently using sparse matrix multiplication. This motivates storing $B_{n}$ and each $B_{n}^{a}$ as sparse matrices.

After the filtered boundary matrices are stored, one of several ``spectra" functions can be used to obtain the eigenvalues of Persistent Laplacians. For example, \texttt{spectra(n,a,b)} returns a vector containing the eigenvalues of $\Delta_{n}^{a,b}.$ It does this by computing the up Laplacian, down Laplacian, adding them together (as appropriate). The down-Laplacian is computed and stored as a sparse matrix using sparse-sparse multiplication. The up-Laplacian is constructed by calling a wrapper function, which defaults to the Schur complement algorithm of \cite{memoli_PL}. Sparse and self-adjoint storage and computations are performed where possible. The Schur complement includes the computation of $D^{\dagger}C$, which is done by first calculating a robust Cholesky LDL decomposition, specialized for the positive semidefinite form of $D$, and then numerically computing an approximate solution of $Dx = C$. Single precision floating point computations are used, rather than double precision, because the performance implications. We found that there is negligible difference in the resulting eigenvalues, but the computation time differs by about a factor of $2$, making it worthwhile to use a single precision. 

Once the up-Laplacian is computed, we add the down-Laplacian and compute the eigenvalues. The eigenvalue computation is also done by calling a replaceable wrapper function, which defaults to a combination of Eigen's \texttt{SelfAdjointEigenSolver} and, in the event that the Laplacian matrix is diagonal, the default wrapper will skip the traditional numerical method and just return the diagonal. This occurs in the final Laplacians of a non-thresholded Rips filtration. This is what MATLAB does, and it saves a great deal of computation time. By assuming the matrix is self-adjoint, the eigenvalues are known to be real, and this is much more efficient than a general eigenvalue algorithm. When only the eigenvalues are wanted, we are able to calculate them without the eigenvectors to improve efficiency. The eigenvalue algorithm performs a tridiagonal decomposition via Householder reflections, then it is diagonalized with an iterative, symmetric and shifted QR method \cite{eigenweb}.

\subsection{Comparison with other methods}

With this implementation described, its efficiency can now be compared with others in the literature, namely the C++ software HERMES \cite{wang2021hermes} and the MATLAB implementation of \cite{memoli_PL}. Since HERMES requires either an alpha or rips filtration and the MATLAB implementation was distributed with a test case using a rips complex, we first consider a rips filtration. All computational experiments were performed on a single core.

We sample $30$ points from the unit sphere and build a rips filtration with maximum simplex dimension $3$ and no distance cutoff. HERMES used its own rips computation, PETLS used its integrated modification of Ripser, and the MATLAB implementation read boundary matrix files output by Dionysus \cite{morozov2017dionysus}. To compute the eigenvalues, HERMES uses the MATLAB \texttt{eigs} function to compute the smallest $100$ eigenvalues (or all, if there are fewer than $100$), and the MATLAB implementation of \cite{memoli_PL} uses the MATLAB \texttt{eig} function to compute all eigenvalues. When the matrix is diagonal, all methods simplify to extracting the diagonal as the eigenvalues.

We compare this with several distinct ways to use PETLS. First, the standard C++ usage. Second, the standard Python usage. Third, we use the C++ version configured to use OpenBLAS and LAPACK \cite{OpenBLAS}, which Eigen supports as a way to dramatically speed up some linear algebra computations. Fourth, for the eigenvalue computation we use the C++ library Spectra \cite{spectra}, which employs an implicitly restarted Lanczos/Arnoldi method similar to ARPACK. In this example, we use Spectra to extract the $10$ smallest eigenvalues, and if the method from Spectra fails to converge after $1000$ iterations, it falls back to the \texttt{SelfAdjointEigenSolver} from Eigen. Note that this failure in convergence happens \textit{often}. This is largely due to the combination of searching for the smallest eigenvalues and the presence of zero-eigenvalues, which disrupts the numerical algorithm. For example, the dimension $0$ Laplacian of most filtrations will often have a high multiplicity of $0$ as an eigenvalue until the complex is connected. This particular step has a trivial contribution to the overall computation cost when using the \texttt{SelfAdjointEigenSolver}. Here again we can skip all of these steps if the matrix is diagonal. Fifth, we call this same algorithm from Python. Note that Eigen also supports the use of the Intel Math Kernel Library (MKL), which may further improve performance, but this is again not portable and also not freely available, so we do not analyze it - although someone seeking to truly maximize performance may benefit from its use.

The same point cloud samples were used for all software configurations. HERMES computes strictly $\Delta_{d}^{a,a + \delta}$ for $d\in\{0,1,2\}$ and $a\in A$, where $A$ is a finite set of positive real numbers. To fairly compare the other software against HERMES, we compute $\Delta_d^{a,a+0.2}$ and its eigenvalues for $d\in D=\{0,1,2\}$ and $a\in A=\{0.0, 0.2, 0.4,\dots, 1.8, 2.0\}$.  For each $d\in D$ and $a\in A$, we track the time taken to compute the matrix of $\Delta_d^{a,a+0.2}$ and its eigenvalues. The number of $1$-simplices in the final filtration step is $435$ and the number of $2$-simplices is $4060$. We repeat these computations for $100$ different random samples of points from the sphere and report the average resulting times.

\begin{figure}[htpb]
    \centering
    \begin{subfigure}{5.5in}
        \centering
      \includegraphics[width=5.5in]{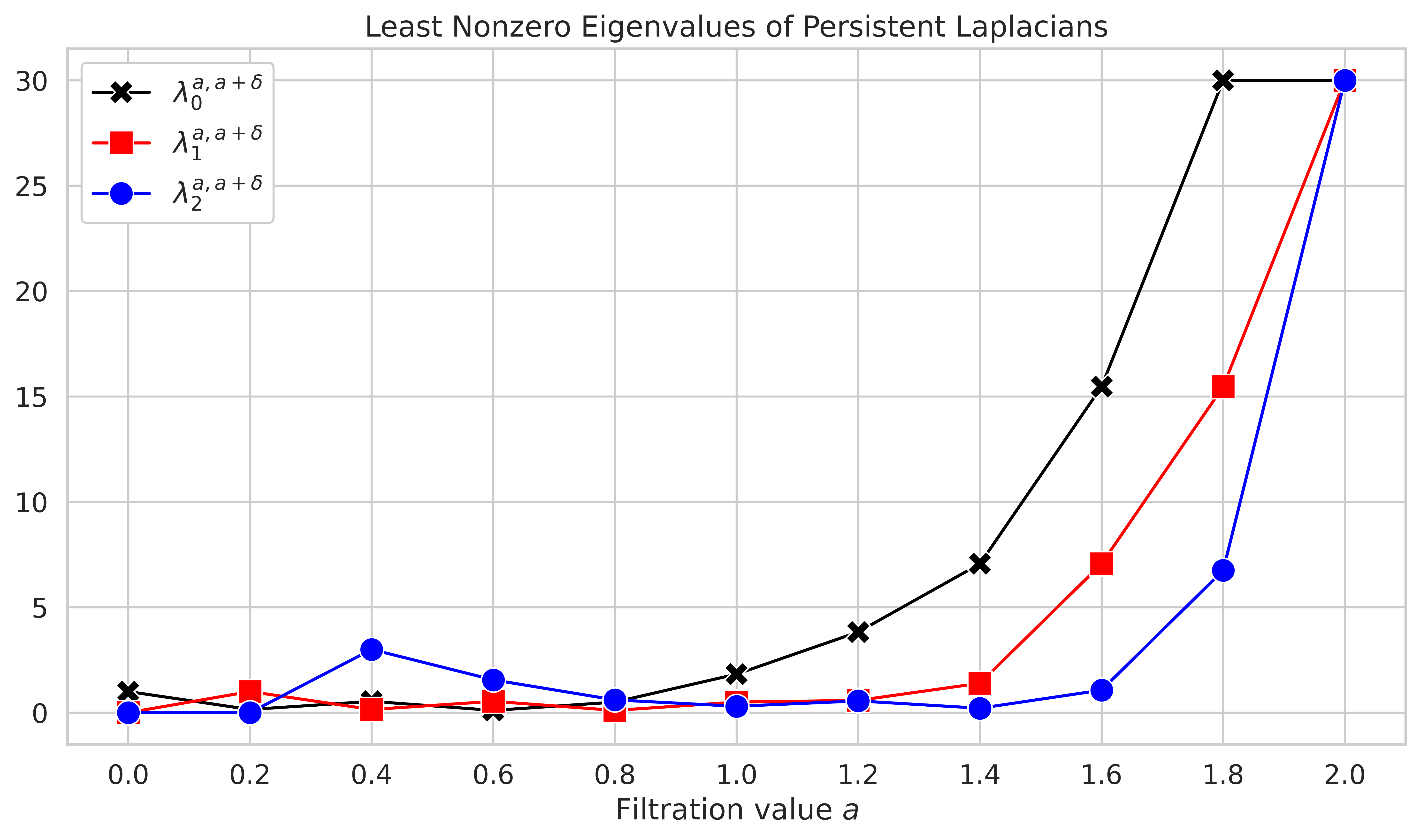}
        \caption{}
    \end{subfigure}
\caption{The least nonzero eigenvalues of persistent Laplacian matrices of a Rips complex from $30$ points sampled from the unit sphere. Values are shown from one individual point sample and are not averaged.}
\label{fig:curve_sphere_rips}
\end{figure}

Figure \ref{fig:curve_sphere_rips} shows the least nonzero eigenvalue of the persistent Laplacian matrices changing as the filtration increases. Figure \ref{fig:times_sphere_rips_petls}(a) shows how the cumulative computation time grows for three of the different PETLS configurations. Note that some of the smaller computation times are recorded as $0$ when they are below the precision of the clock used to measure the speed. The cumulative computation time for dimensions $0$ and $1$ is under $0.05s$ for each PETLS configuration, so we focus on dimension $2$. First, observe that the eigenvalue computations (dashed) take much longer than the matrix computations (solid) for the standard C++ and Python configurations, although for the Python configuration using Spectra the eigenvalue computation is slightly faster than the matrix computation. The Python matrix computations take the same time as the C++ matrix, indicating little overhead cost. The Python eigenvalue computation does take significantly longer in this example, but this is still generally much faster than HERMES and the MATLAB persistent Laplacian. 

\begin{figure}[htpb]
    \centering
    \begin{subfigure}{5.5in}
       \centering
        \includegraphics[width=5.5in]{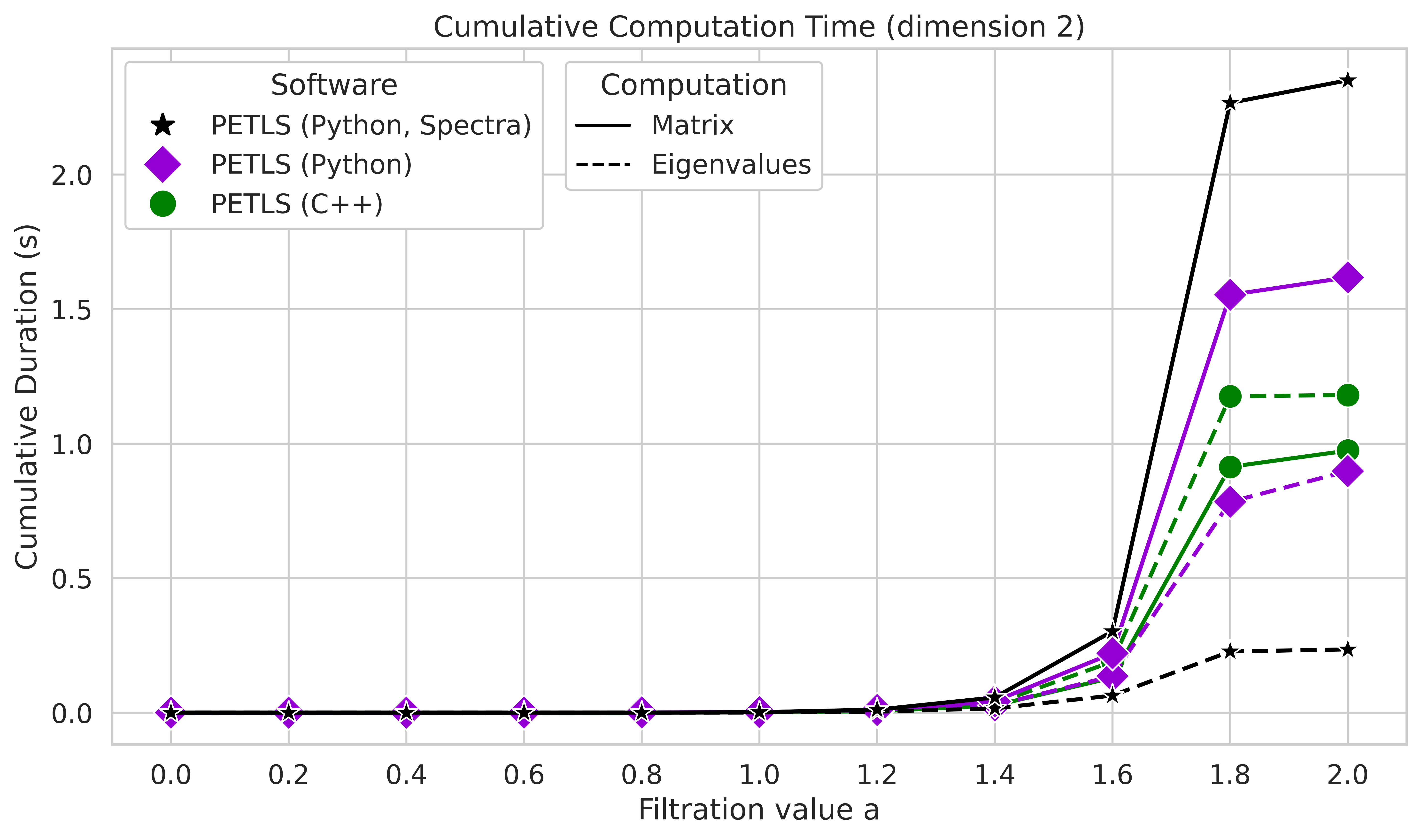}
        \caption{}
    \end{subfigure}
    \begin{subfigure}{5.5in}
        \centering
        \includegraphics[width=5.5in]{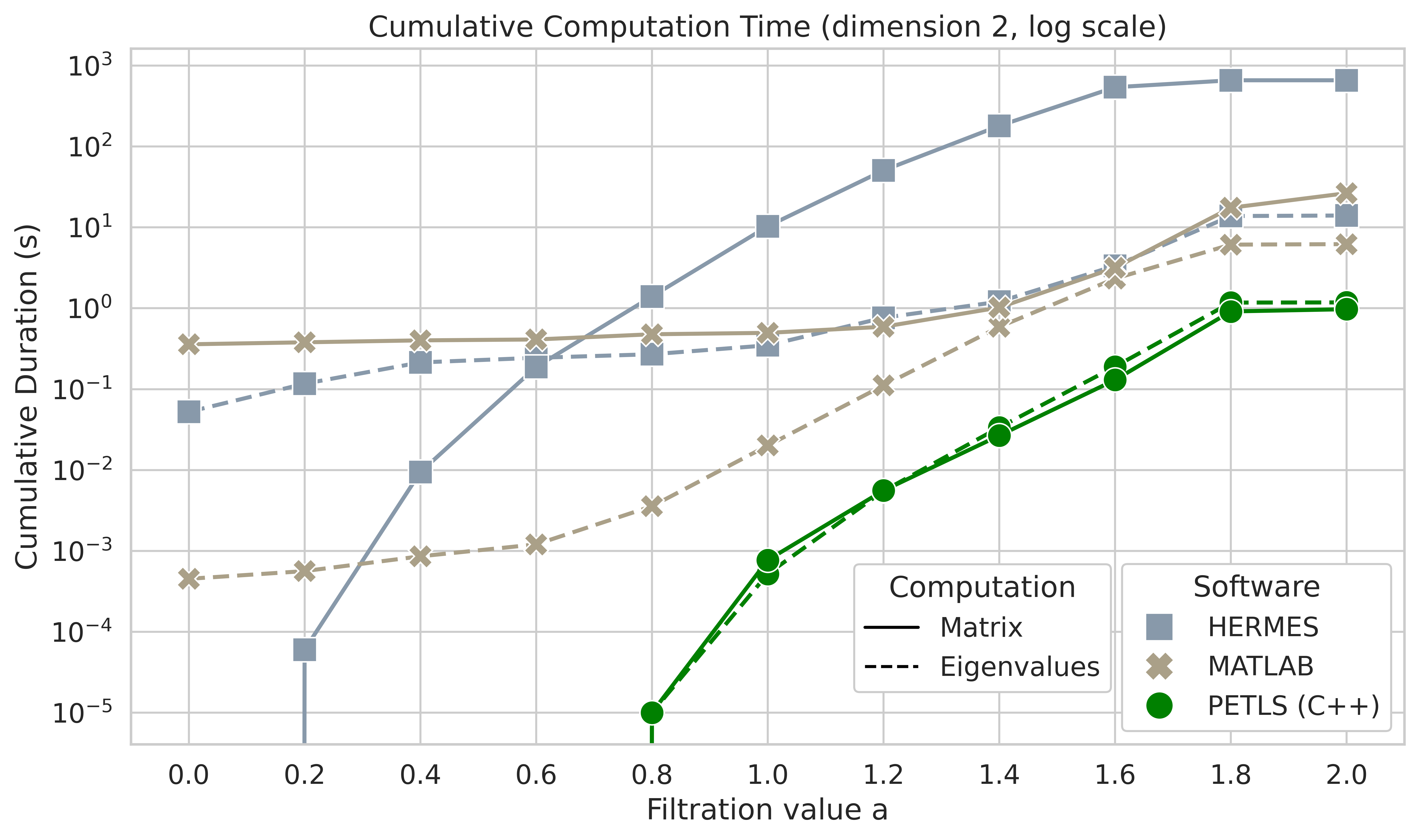}
        \caption{}
    \end{subfigure}
\caption{Computation time using several PETLS configurations to compute persistent Laplacian matrix and eigenvalues of a Rips complex from $30$ points sampled from the unit sphere. Results are averaged over $100$ replicates.}
\label{fig:times_sphere_rips_petls}
\end{figure}

In Figure \ref{fig:times_sphere_rips_petls}(b) we compare computation times for the Python with Spectra configuration of PETLS with HERMES and the MATLAB implementation on a logarithmic scale. This demonstrates that this usage of PETLS dramatically outperforms both HERMES and MATLAB at each filtration step. 

It is important to note that the usage of the method from Spectra requires choosing the number of eigenvalues and whether to select the largest or smallest. Because of these choices, and the fact that it fails to converge in common situations, it is not fully generalizable. However, it extracts the maximum performance out of all of the methods. With the exception of this Spectra method, the dominant problem has now changed: in HERMES and the MATLAB implementation, the bottleneck is computing the matrix $\Delta_n^{a,b}$, while in PETLS the bottleneck is typically the eigenvalue computation. This is good news, as there are many ways to approach this problem. Implicitly restarted Lancoz/Arnoldi methods from Spectra are one way, but more sophisticated approaches are possible. 

In Table \ref{tab:compare_sphere_rips} we report the combined computation time for both the matrix and eigenvalues, across all dimensions and filtration steps for each software, averaged over the $100$ random samples. The main observation is that even the slowest PETLS configuration is at least $8$ times faster than the MATLAB implementation. With further optimizations, we are able to reduce a $22$ minute HERMES computation or a $1$ minute MATLAB computation to under $5$ seconds, representing a speedup by factors of over $300$ and $15$, respectively. Moreover, this ignores the heavy practical computational cost of starting MATLAB that occurs with each execution of HERMES. The speedup from the slowest PETLS configuration to the fastest is by a factor of about $2$.

% add \makecell{PETLS\\ (C++)} for newlines in header
% This is from the corrected data
\begin{table}
	\centering
	\begin{tabular}{lr|lll}
		\toprule
		Software & Time (s) & \multicolumn{3}{c}{Speedup Over} \\
		 &  & HERMES \cite{wang2021hermes} & MATLAB \cite{memoli_PL} & \makecell{PETLS\\ (C++, Spectra)} \\
		\midrule
			HERMES & 1359.303 & - & - & - \\
			MATLAB & 68.289 & 19.91 & - & - \\
			PETLS (C++, Spectra) & 8.268 & 164.41 & 8.26 & - \\
			PETLS (LAPACK) & 7.349 & 184.96 & 9.29 & 1.12 \\
			PETLS (Python, Spectra) & 5.231 & 259.86 & 13.05 & 1.58 \\
			PETLS (Python) & 5.074 & 267.90 & 13.46 & 1.63 \\
			PETLS (C++) & 4.332 & 313.78 & 15.76 & 1.91 \\
		\bottomrule
	\end{tabular}
	\caption{Absolute execution time and relative speedup using different software to compute all persistent Laplacian matrices $L_d^{a,a+0.2}$ and their eigenvalues for Rips complexes built from $30$ points randomly sampled from the unit sphere, where $d\in\{0,1,2\}$ and $a\in\{0.0,0.2,0.4,\dots,2.0\}$. Time is averaged over $100$ replications. Relative speedup is the time taken by the software in the column divided by the time taken by the software in the row. Each row reports the performance gains of that software.}
	\label{tab:compare_sphere_rips}
\end{table}

\subsection{Eigenvalue algorithms}

Since the eigenvalue computation time is critical, yet possible to manage, we further interrogate why this takes so long and determine which eigenvalue algorithms are appropriate. We compute the eigenvalues of several persistent Laplacian matrices to show that persistent Laplacian matrices are a particularly challenging eigenvalue problem and some ways to mitigate it.

We compute the alpha complex of $100$ randomly sampled tori, each with an inner radius of $3$, an outer radius of $1$, and $500$ points, generated using Scikit-TDA's TaDAsets \cite{scikittda2019}. We compute the persistent Laplacian matrices $\Delta_i^{a,a+\delta}$ for $i\in\{0,1,2\}, a\in\{0.0,0.1,\dots, 5.0\}$ and $\delta=0.1$ in dimensions $i=0,1,$ and $2$. Then we compute the eigenvalues of these matrices using several algorithms. The full spectrum of eigenvalues for each matrix is computed using Eigen's generic \texttt{EigenSolver}, Eigen's self-adjoint specialized algorithm, and SciPy's \texttt{eigvalsh} algorithm, which uses LAPACK methods for symmetric matrices. We compute the largest and smallest $10$ eigenvalues using Spectra and SciPy's sparse eigenvalue solvers. For the smallest eigenvalues, we use the shift-and-invert method, which is more stable and efficient than other methods. When the Spectra method fails to converge, Eigen's self-adjoint method is used, and when SciPy's smallest eigenvalues method fails to converge, the full-spectrum SciPy method is used.

Figure \ref{fig:eigenvalue_algorithms_torus_d2} shows how the computation time scales with respect to the number of simplices (equivalently, rows in the matrix) in dimension $2$. The rank ordering is fairly stable above $1000$ simplices, regardless of matrix size. The algorithms that compute a subset of the eigenvalues generally perform faster at all scales, save for Spectra's smallest-eigenvalues. This is largely because the algorithm frequently fails to converge for these persistent Laplacian matrices. Also observe that SciPy's smallest and largest eigenvalue methods generally take the same amount of computation time for persistent Laplacians with up to $4000$ simplices. In general, computing the smallest eigenvalues is slower and less robust than computing the largest, but it may not be meaningfully worse in the context of persistent Laplacians of this size.

Table \ref{tab:eigenvalue_algorithms} shows the average time (across the $100$ tori) to compute all of the eigenvalues in dimensions $i=0,1,$ and $2$. Observe that the generic eigenvalue algorithm from Eigen consumed over $13,000$ seconds of computation time ($3$ hours and $51$ minutes). If not managed correctly, the choice of eigenvalue algorithm can quickly make computing persistent Laplacian eigenvalues intractable. Eigen's self-adjoint method is the natural appropriate method, but far from optimal. Spectra's methods are a strict improvement over Eigen's methods, but not nearly as much as SciPy's smallest or largest eigenvalue algorithms. SciPy's full-spectrum method offers a modest improvement over Eigen's self-adjoint. If the smallest or largest few eigenvalues convey sufficient information about a complex for an application, then we may need as little as $6$ seconds - beating the generic approach by a factor of over $2000$ and the reasonable alternative (Eigen's self-adjoint method) by a factor of over $300$. 

\begin{figure}[htpb]
    \centering
    \begin{subfigure}{5.4in}
       \centering
        \includegraphics[width=5.4in]{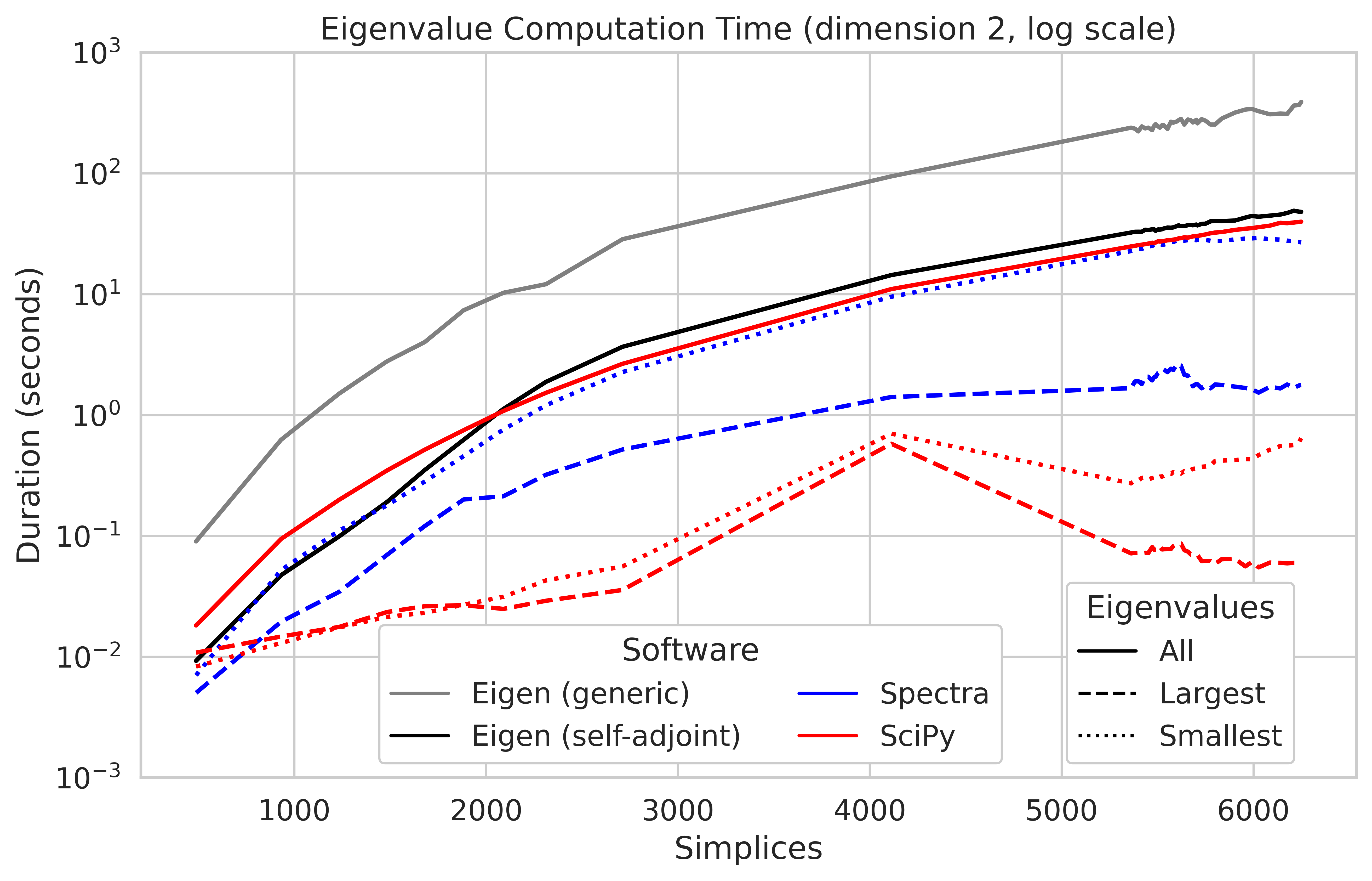}
    \end{subfigure}
\caption{Time to compute the eigenvalues of pre-computed persistent Laplacian matrices of Alpha complexes on Tori using different eigenvalue computation software. Results are averaged over $100$ replicates.}
\label{fig:eigenvalue_algorithms_torus_d2}
\end{figure}

\begin{table}
\centering
\begin{tabular}{lr|rr}
\toprule
 & & \multicolumn{2}{c}{Speedup Over} \\
 & Time(s) & Eigen (generic) & Eigen (self-adjoint) \\
\midrule
Eigen (generic) & 13891.9 & - & - \\
Eigen (self-adjoint) & 1943.2 & 7.1 & - \\
SciPy & 1538.9 & 9.0 & 1.3 \\
Spectra (smallest) & 1343.3 & 10.3 & 1.4 \\
Spectra (largest) & 110.9 & 125.2 & 17.5 \\
SciPy (smallest) & 33.2 & 418.7 & 58.6 \\
SciPy (largest) & 6.3 & 2200.4 & 307.8 \\
\bottomrule
\end{tabular}
\caption{Execution time and relative speedup using different eigenvalue computation software for persistent Laplacians $\Delta_i^{a,a+\delta}$ of tori in dimensions $0, 1, $ and $2$ and $a\in\{0.0,0.1,\dots, 5.0\}$ with $\delta=0.1$.}
\label{tab:eigenvalue_algorithms}
\end{table}

The algorithms that compute a subset of eigenvalues are generally faster, and in some sense this is the expected outcome. It is not always the case with persistent Laplacians, however. If we compute the eigenvalues of persistent Laplacians where the complex has homology with high rank, this gives a high multiplicity of zero as an eigenvalue of the persistent Laplacian. This destabilizes the algorithms that compute a subset of eigenvalues.

To see this experimentally, we consider an example from \cite{jones_persistent_2025}. A directed graph is built using 3D coordinates of atoms in a protein-ligand complex as vertices, and we compute the persistent Laplacian of the directed flag complex of this directed graph. Edges are added between the vertices corresponding to a protein atom $P$ and a ligand atom $Q$ with a filtration value $r$ equal to the Euclidean distance between $P$ and $Q$. Edges are added between the vertices corresponding to two ligand atoms $Q_1$ and $Q_2$ also at filtration value equal to the Euclidean distance, and only if there is a chemical bond between $Q_1$ and $Q_2$. No edges are added for vertices corresponding to two protein atoms. The directions of these edges is determined by the element's electronegativity, as a model of the direction electric charge might flow. When two atoms $A_1$ and $A_2$ are of the same element and would have an edge added at filtration $r$, the edges $A_1\to A_2$ and $A_2\to A_1$ are both added at filtration $r$, since their elemental electronegativity is equal. Finally, only protein atoms within a threshold distance of a ligand atom are included. This gives a maximum filtration value to consider. This process produces a filtered directed graph $G$, and from this we can build the filtered directed flag complex $dFl(G)$. We do this for an example protein-ligand complex, PDBID  1A99 \cite{1a99}. Then, just as with the tori, we compute a collection of persistent Laplacian matrices and run those through the battery of eigenvalue algorithms. We choose $\Delta_i^{a,a+\delta}$ for $a\in\{0.0,0.1,\dots,10.0\}, \delta=0.1,$ and $i\in\{1,2\}$.

The resulting behavior of some eigenvalue methods does not comply with the standard guideline. SciPy's sparse Hermitian eigenvalue algorithm \texttt{eigsh} uses the implicitly restarted Lanczos algorithm from ARPACK. This algorithm is iterative and depends on the choice of an error tolerance $\texttt{tol}$ and the number of vectors used in the Krylov subspace $\texttt{ncv}$. To compute $k$ eigenvalues and eigenvectors of an $n\times n$ hermitian matrix, it is necessary that $k<n$ and $k<\texttt{ncv}<n$, and it is often recommended to use $\texttt{ncv}\geq 2k$ \cite{arpack_user_guide}. In general, a lower tolerance increases the number of iterations, and therefore computation time, but provides more stable and accurate results. Increasing the number of Lanczos vectors increases the cost of each iteration and increases the memory usage.  Parameters $\texttt{tol}$ and $\texttt{ncv}$ should be chosen in a way that reliably yields accurate results, but without extreme computational demands. % ncv >= 2k is on page 30 in ARPACK user guide (pdf page 42)

In Figure \ref{fig:ncv_tol_heatmap} we show the time to compute $k=10$ eigenvectors for a variety of choices of $\texttt{tol}$, from machine precision $\epsilon$ to $10^{-4}$, and a variety of Lanczos vectors $\texttt{ncv}$ from $20=2\cdot k$ to $250=25\cdot k$. Results where the algorithm failed to converge within $1000$ iterations or converged to an incorrect eigenvalue are omitted. The best choice of parameters should be where computation time is the lowest with acceptable confidence in the numerical quality of the results.

The generally recommended $\texttt{ncv}=2k$ did not converge for any sampled tolerance, including the default of machine precision $\epsilon$, and the algorithm did not reliably converge until $\texttt{ncv}\geq 60=6k$, much higher than the standard recommendation. As $\texttt{ncv}$ increased toward $250$, the computation time settled for all tolerances to a range of $35-46$ seconds. Computation times of under $30$ seconds were generally achieved for tolerances $10^{-4}\leq \texttt{tol}\leq 10^{-11}$ and number of Lanczos vectors $6k=60\leq \texttt{ncv}\leq 100=10k$.  Importantly, the default tolerance of machine precision $\epsilon\approx 10^{-16}$ was significantly slower than other methods and often failed to converge.
{\small 
\begin{figure}
    \centering
    \includegraphics[width=6in]{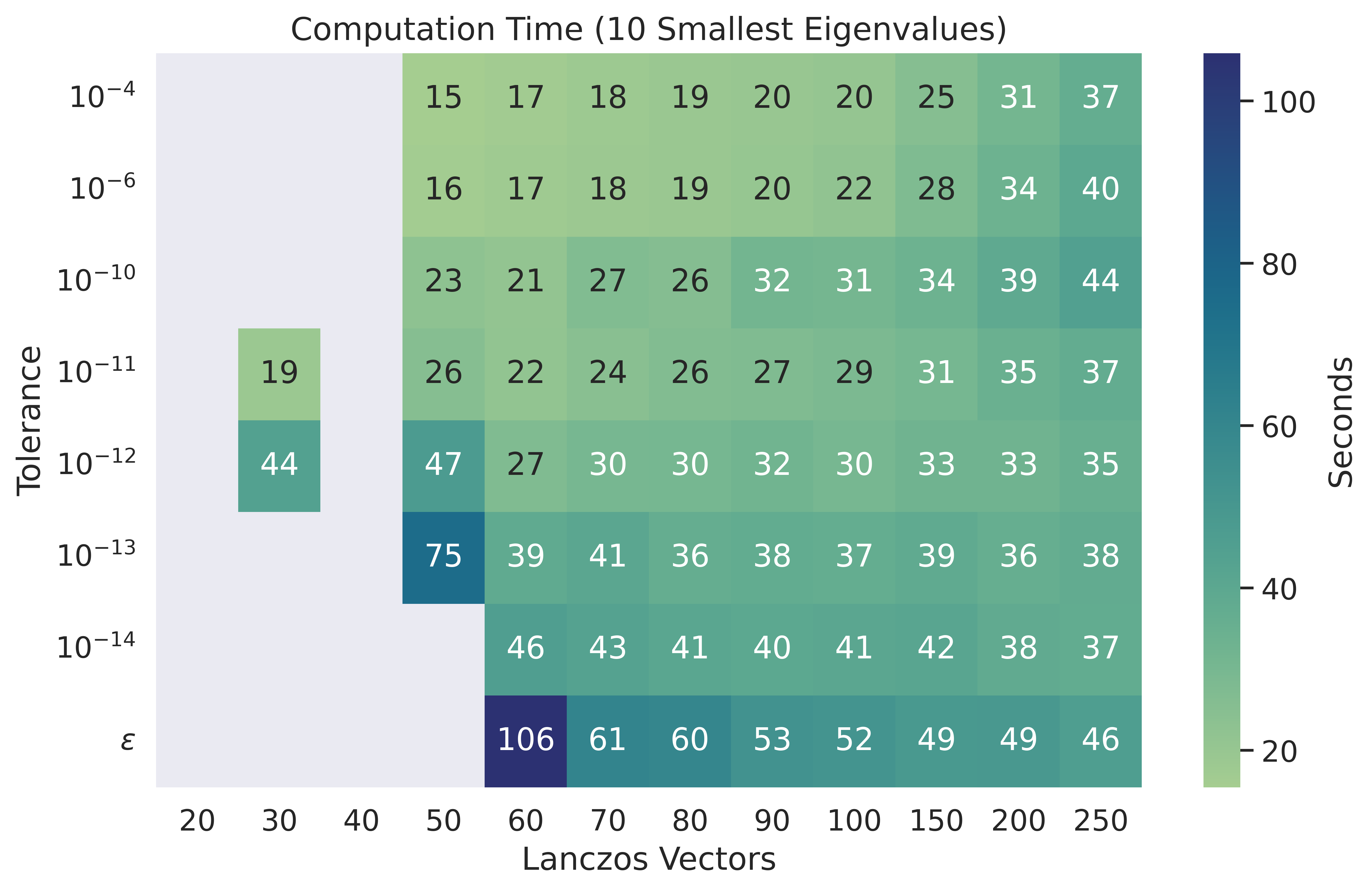}
    \caption{Eigenvalue convergence and computation time vary when using the implicitly restarted Arnoldi method of SciPy. This figure shows how these scale on a directed flag complex associated to a directed graph built on a protein-ligand complex (PDBID: 1A99), with respect to two key hyperparameters: the convergence tolerance and the number of Lanczos vectors in the Krylov subspace. Standard guidance suggests using parameter combinations that are not suitable for persistent Laplacians.}
    \label{fig:ncv_tol_heatmap}
\end{figure}
}

Now that a reliable range of $\texttt{ncv}$ and $\texttt{tol}$ has been established for this problem with the SciPy smallest-eigenvalue algorithm, we can compare some of these against other eigenvalue methods. In Table \ref{tab:protein_dflag} we show the execution time for several methods, where we compute the same spectrum of the directed flag complex, averaged over $100$ executions. For SciPy's smallest-eigenvalue algorithm, we took $\texttt{ncv}=60$, while for Spectra's we took $\texttt{ncv}=20$, since this was sufficient for reliable numerical results for this complex. Although SciPy uses the actual ARPACK routines, Spectra uses a re-implementation of the algorithms. Spectra methods use a different convergence criterion to stop iterations, which can result in more stable performance (here) or less stable performance, as we saw in Table \ref{tab:eigenvalue_algorithms}.

SciPy's smallest eigenvalue algorithm with $\texttt{tol}=\epsilon$ took $6$ times longer than the Eigen self-adjoint method. In fact, the only iterative method that performed substantially better than the full-spectrum methods was Spectra's largest-eigenvalue computation. This is a categorically different ranking from that observed in the torus experiment, where SciPy's smallest and largest eigenvalue algorithms were much faster than all other methods. 

\begin{table}
\centering
\begin{tabular}{lr|rrr}
\toprule
 & \multicolumn{3}{c}{Speedup Over} \\
 & Time & \makecell{Eigen\\ (generic)} & \makecell{SciPy\\ (smallest, $\texttt{tol}=\epsilon$)} & \makecell{Eigen\\ (self-adjoint)} \\
\midrule
Eigen (generic) & 388.1 & 1.0 & 0.4 & 0.1 \\
SciPy (smallest, $\texttt{tol}=\epsilon$) & 151.4 & 2.6 & 1.0 & 0.2 \\
Spectra (smallest) & 45.6 & 8.5 & 3.3 & 0.5 \\
Eigen (self-adjoint) & 23.9 & 16.2 & 6.3 & 1.0 \\
SciPy & 22.3 & 17.4 & 6.8 & 1.1 \\
SciPy (smallest, $\texttt{tol}=10^{-6}$) & 18.7 & 20.8 & 8.1 & 1.3 \\
SciPy (largest) & 17.3 & 22.5 & 8.8 & 1.4 \\
Spectra (largest) & 2.1 & 189.0 & 73.7 & 11.6 \\
\bottomrule
\end{tabular}
\caption{Computation times for different eigenvalue algorithms for the directed flag complex of a protein-ligand complex.}
\label{tab:protein_dflag}
\end{table}

Together, these experiments suggest a method for choosing eigenvalue algorithms. If the goal is to find the largest eigenvalues, it is likely there are \textit{some} iterative methods that are much faster than other iterative methods or full-spectrum methods. If the goal is to find the smallest eigenvalues, it may be more effective to use full-spectrum methods, but an iterative method may still be the best. These trends tend to reveal fairly early in a filtration, as in Figure \ref{fig:eigenvalue_algorithms_torus_d2}, where the rank ordering remained stable above $2,000$ simplices, which all algorithms (other than Eigen's generic) could compute in $1$ second or less. Therefore, a trial-and-error approach on a subset of data may be a reasonable method for scaling up in a given problem.

We have shown the persistent Laplacian eigenvalue problem does not obey standard rules of thumb, and this can be partially attributed to the underlying topology, since the dimension of homology corresponds to the presence of $0$ as an eigenvalue.  High-rank persistent homology is expected for short-lived features, which appear as points near the diagonal on a persistence diagram or as short bars on a barcode. The optimal eigenvalue algorithm can therefore depend on which persistent Laplacians are being computed for a complex, where sparse methods will generally be the most effective for persistent Laplacians that span larger filtration intervals. In some settings, a good choice of hyperparameters may differ substantially from general recommendations and defaults. Rather than optimizing a choice of parameters for a particular eigenvalue algorithm in a problem-dependent fashion, we can mitigate this problem by more sophisticated methods, which are discussed in Sections \ref{subsec:flipped} and \ref{subsec:reduction}. 

Another possible approach would be to use homotopy continuation as in \cite{wei2021homotopy}. However, current homotopy continuation methods are not computationally scalable for large data.

\subsection{Top-dimensional Laplacian eigenvalues}\label{subsec:flipped}
The highest dimensional Laplacian $\Delta_N^{a,b}=\Delta_{N,\mathrm{down}}^{a}$ is sometimes the slowest to compute the eigenvalues for, but a small modification can greatly reduce this bottleneck. The following standard linear algebra fact allows us to compute the spectra of these Laplacians twice as fast.

\begin{theorem}
    Suppose $A\in\mathbb{R}^{m\times n}, B\in \mathbb{R}^{n\times m}.$ If $\lambda \neq 0$ is an eigenvalue of $AB\in\mathbb{R}^{m\times m}$, then $\lambda$ is an eigenvalue of $BA\in\mathbb{R}^{n\times n}$.
\end{theorem}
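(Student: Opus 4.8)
The plan is to work directly with eigenvectors, pushing an eigenvector of $AB$ through the map $B$ to manufacture an eigenvector of $BA$ with the same eigenvalue. This is the cleanest route and avoids any appeal to characteristic polynomials or determinant identities.

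First I would fix a nonzero $v \in \mathbb{R}^m$ witnessing that $\lambda$ is an eigenvalue of $AB$, so that $ABv = \lambda v$. The natural candidate for an eigenvector of $BA$ is $w = Bv \in \mathbb{R}^n$. The key computation is then immediate: applying $B$ to the eigenvector equation gives
\[
BA w = BA(Bv) = B(ABv) = B(\lambda v) = \lambda (Bv) = \lambda w,
\]
so $w$ satisfies the eigenvector equation for $BA$ at the eigenvalue $\lambda$.

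The one point that requires genuine care — and the only place the hypothesis $\lambda \neq 0$ is used — is verifying that $w = Bv$ is not the zero vector, since the zero vector does not count as an eigenvector. I would argue this by contradiction: if $Bv = 0$, then $\lambda v = ABv = A(Bv) = 0$, and because $\lambda \neq 0$ this forces $v = 0$, contradicting the choice of $v$ as a nonzero eigenvector of $AB$. Hence $w \neq 0$, and $\lambda$ is a genuine eigenvalue of $BA$. This nonvanishing step is the main (and essentially the only) obstacle; everything else is a one-line manipulation.

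I would note in closing that the statement is intentionally one-directional and restricted to nonzero eigenvalues: $AB$ and $BA$ need not even have the same size, so they cannot share their full spectra, and $0$ may be an eigenvalue of one but not the other. The symmetry of the argument (swapping the roles of $A$ and $B$) shows the nonzero eigenvalues of $AB$ and $BA$ in fact coincide, which is exactly what is needed to compute the spectrum of $\Delta_{N,\mathrm{down}}^{a} = (d_N^a)^* d_N^a$ by instead diagonalizing the smaller or more convenient product $d_N^a (d_N^a)^*$.
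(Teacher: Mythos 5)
Your proof is correct and takes essentially the same route as the paper: push an eigenvector $v$ of $AB$ through $B$ to get $BA(Bv)=\lambda(Bv)$. In fact you are slightly more careful than the paper's own one-line argument, which silently omits the verification that $Bv\neq 0$ --- the step where the hypothesis $\lambda\neq 0$ is actually used.
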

\begin{proof}
    Suppose $x$ is the eigenvector of $AB$ associated with $\lambda$, so that $ABx=\lambda x$. Then $BABx=B\lambda x$, or $BA(Bx)=\lambda(Bx)$, so that $Bx$ is an eigenvector of $BA$ with eigenvalue $\lambda.$
\end{proof}

This means that the nonzero eigenvalues of $\Delta_{N,\mathrm{down}}^{a}=B_N^a(B_N^a)^T$ and $\Delta_{N-1,\mathrm{up}}^{a}=(B_N^a)^TB_N^a$ are the same. This is noted in \cite{goldberg2002combinatorial}, but has not been implemented in any persistent Laplacian software. We can therefore choose to compute the nonzero eigenvalues of the smaller matrix and $0$-pad to obtain the full set of eigenvalues.

Consider the directed flag complex discussed in Section \ref{subsec:computation}. This produces a directed graph with $209$ vertices, $1,466$ edges, and $2,361$ $3$-cliques (2-simplices in the directed flag complex), with filtration values between $1.5${\AA} and the cutoff value of $10${\AA}. We then compute the persistent Laplacians $\Delta_d^{a,a+0.1}$ for $d\in\{1,2\}$ and $a\in\{0.0, 0.1,0.2,\dots,10.0\}$ using the two different methods to compute the top-dimensional Laplacian (in this case, $N=2$). This is repeated $100$ times. Figure \ref{fig:flipped}(a) shows the average cumulative computation time of the eigenvalue steps, where we can see that it takes the same amount of time to compute the eigenvalues of $\Delta_{N,\mathrm{down}}^{a}$ via $\Delta_{N-1,\mathrm{up}}^{a}$ as it does to compute the eigenvalues of $\Delta_{N-1}^{a,b}$.  The dimension $2$ eigenvalue computation moves from $13.1$s to $3.5$s, a $73\%$ reduction in computation time. Figure \ref{fig:flipped}(b) quantifies the impact on the overall computation: this method amounts to a greater speedup than $2$-fold in the time it takes to compute the full collection of persistent Laplacian matrices and their eigenvalues in dimensions $1$ and $2$.

\begin{figure}
\centering
    \begin{subfigure}{3.5in}
       \centering
        \includegraphics[width=3.5in]{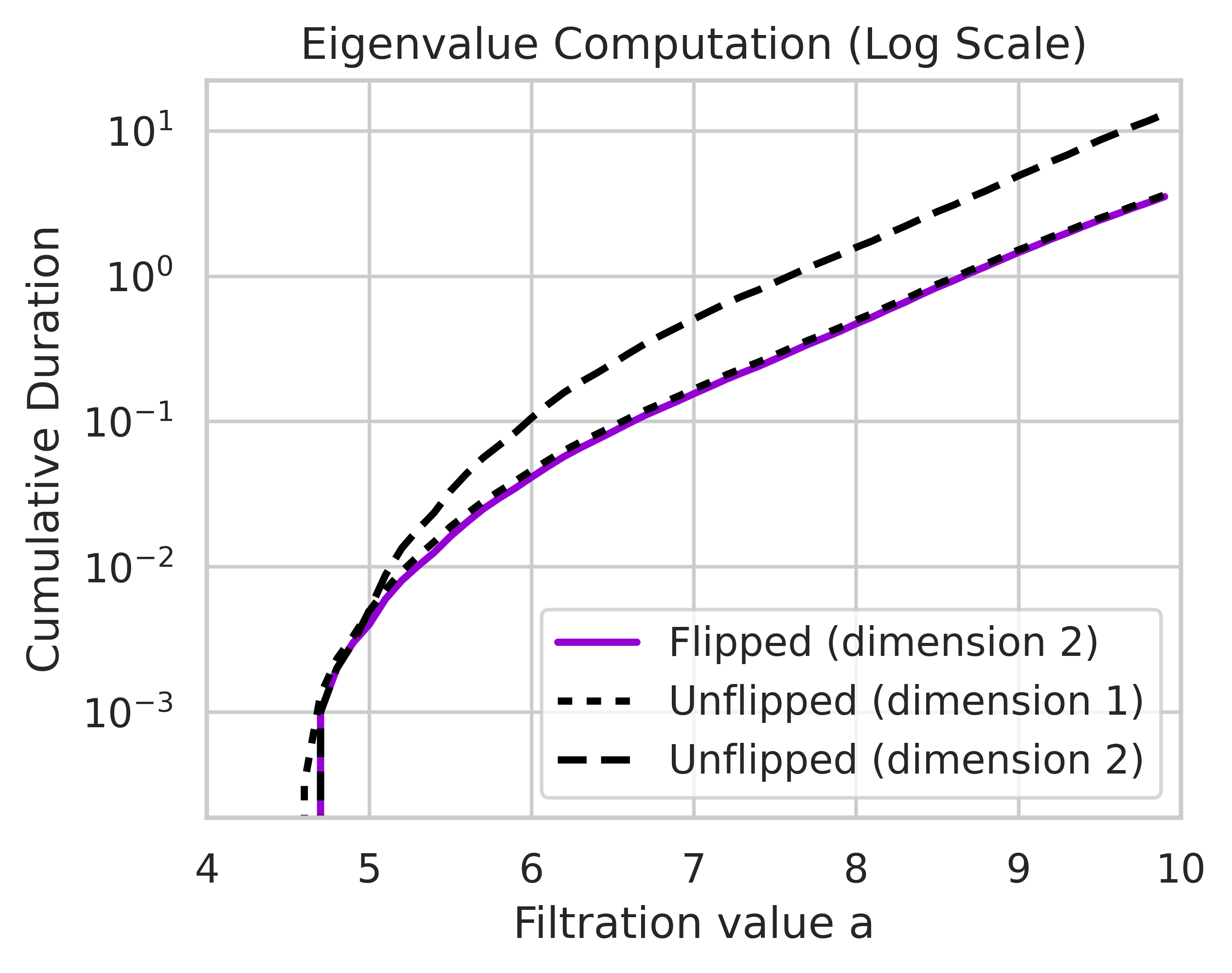}
        \caption{}
\end{subfigure}\hfill\begin{subfigure}{3.5in}
       \centering
        \includegraphics[width=3.5in]{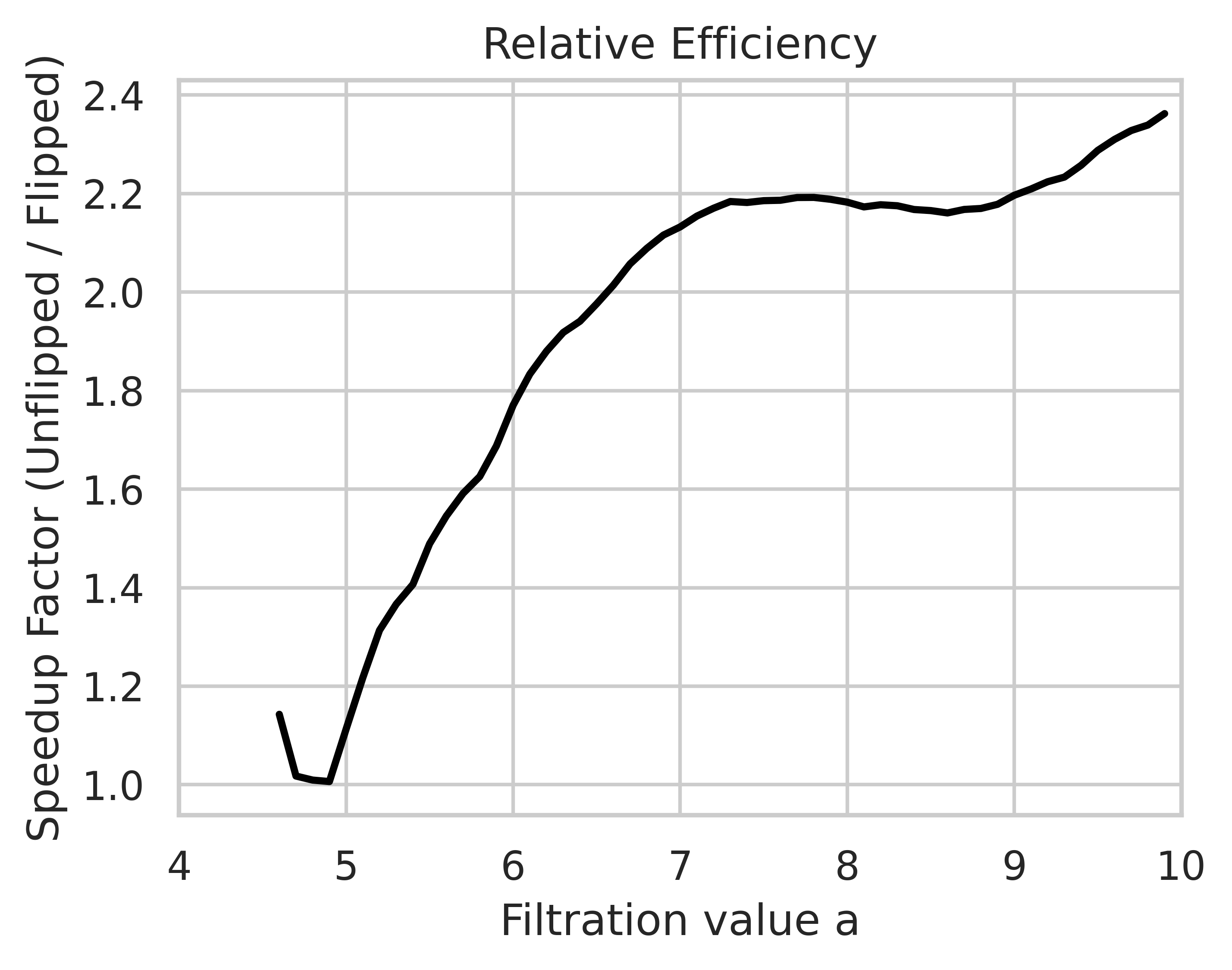}
        \caption{}
    \end{subfigure}
    \caption{Efficiency comparison for computing the top-dimensional Laplacian eigenvalues of a directed flag complex built on a protein-ligand complex using the traditional method and the ``flipped" method. (a) Cumulative time with respect to the filtration to compute the eigenvalues of the persistent Laplacian matrices. (b) The relative speedup in cumulative computation time for both matrix and eigenvalue computations in combined dimensions $1$ and $2$.}
    \label{fig:flipped}
\end{figure}

The benefit occurs when $B_N$ has many more columns than rows, or when the complex has many $N$ cells and not many $N-1$-cells. In the most extreme case, the Vietoris-Rips complex truncated at dimension $N$ with no radius threshold on a point cloud of size $k$ has $n_N =\binom{k}{N}$ many $N$-cells and $n_{N-1}=\binom{k}{N-1}$ $N-1$-cells. Then there are $n_N-n_{N-1}=\frac{k!(k-2N+1)}{N!(k-N+1)!}$ more $N$-cells than $N-1$-cells. For a fixed dimension $N$, the difference in matrix size is $O(k^N)$. This is essentially the most dramatic case, and in practice much smaller complexes are used, but this shows that there are complexes where this method can offer arbitrarily large benefits.

\subsection{Reduction by persistent homology}\label{subsec:reduction}
One of the central themes of persistent Laplacian is that it can be used to compute persistent Betti numbers, something that is well-studied from the perspective of persistent homology. An active area of research is the computation of topological invariants of persistent homology as in \cite{basu2024harmonic}, which can be formulated through the harmonic spectra given by the kernel of a Laplacian. We have two ways to study the same mathematical object, and so the two perspectives may inform each other. In fact, we can use persistent homology to isolate the information unique to persistent Laplacians. This leads to an algorithm that reduces the size of the current practical bottleneck, computing the eigenvalues. We use Schur restriction from \cite{gulen_et_al:LIPIcs.SoCG.2023.37} to restrict the Laplacian to the complement of the harmonic subspace. 

Suppose $d=\dim C_n^a$ and $L_n^{a,b}\in\mathbb{R}^{d\times d}$ is a persistent Laplacian. Let $\{\sigma_1,\dots,\sigma_{\beta_n^{a,b}}\}$ be a basis of cycle representatives for $H_n^{a,b}(K;\mathbb{R})$. If each $\sigma_i$ is written as a vector $\sigma_i=[\alpha^i_0,\dots,\alpha^i_{d}]^T$, we can form the matrix $N=\begin{bmatrix}\sigma_1 & \dots & \sigma_{\beta_n^{a,b}}\end{bmatrix}\in\mathbb{R}^{d\times \beta_n^{a,b}}$ where each column is the vector corresponding to a representative cycle. By the isomorphism $\ker\Delta_n^{a,b}\cong H_n^{a,b}$, we have that $N$ is a basis for $\ker\Delta_n^{a,b}$. Let $X\in\mathbb{R}^{d\times(d-\beta_n^{a,b})}$ be a basis for the orthogonal complement of $\ker\Delta_n^{a,b}$. Then the block matrix $S=\begin{bmatrix} X & N \end{bmatrix}\in\mathbb{R}^{d\times d}$ can be used as a change of basis for $L$:

\begin{align*}
    \tilde{L} &= S^{-1} L S\\
    &= S^{-1} L \begin{bmatrix} X & N \end{bmatrix}\\
    &= S^{-1}\begin{bmatrix} LX & LN\end{bmatrix}\\
    &= S^{-1}\begin{bmatrix} LX & 0\end{bmatrix}\\
    &= \begin{bmatrix} S^{-1} LX & 0\end{bmatrix}.
\end{align*}

Note that $S^{-1}LX\in\mathbb{R}^{d\times(d-\beta_n^{a,b})}$ and $0=LN\in \mathbb{R}^{d\times \beta_n^{a,b}}$. Note that $\tilde{L}$ has the same eigenvalues as $L$, while satisfying the requirements of the Schur restriction: $\tilde{L}=\begin{bmatrix}A & B\\C & D\end{bmatrix}$, where $A\in\mathbb{R}^{(d-\beta_n^{a,b})\times(d-\beta_n^{a,b})}$ and $D\in \mathbb{R}^{\beta_n^{a,b}\times\beta_n^{a,b}}$. Since $LN=0$, we have $C=0$ and $D=0$. Then the Schur complement $\tilde{L}/D=A-BD^{-1}C=A$ is the restriction of $L$ onto $\left(\ker\Delta_n^{a,b}\right)^\perp$. This is the top $(d-\beta_n^{a,b})$ rows of $\begin{bmatrix}X & N\end{bmatrix}^{-1}LX$. Thus, this represents the restriction of persistent Laplacian to the non-harmonic space, and we have effectively removed the information that persistent Laplacian contains about persistent homology.

Since $B=0$, $\tilde{L}$ is a block lower-triangular matrix and its eigenvalues are the combination of the eigenvalues of $A$ and $D$. Since $D=0$, and we have a correspondence between the rows in $D$ and $0$-eigenvalues of $\tilde{L}$, we know that the eigenvalues of $A$ are all of the nonzero eigenvalues of $\tilde{L},$ and more importantly of $L$. Therefore, we can compute the eigenvalues of the positive definite (no longer positive semi-definite) matrix $A\in\mathbb{R}^{(d-\beta_n^{a,b})\times(d-\beta_n^{a,b})}$. Because there is no high multiplicity of $0$ as an eigenvalue, eigenvalue algorithms should be faster and more stable on $A$. The matrix $A$ is also a smaller matrix than $L$, further contributing to the benefit.

The expected benefits of this method are difficult to quantify, as the motivation is due to the failure of the randomized iterative algorithms on the Eigenvalue problem, which depends in a non-precise way on the size of the null space. However,  the costs are easier to quantify. First, we must obtain persistent homology with representatives and real coefficients. This could be done once for the filtered complex and the results could be used for all of the possible persistent Laplacians. This one-time cost is negligible compared to the recurring cost of the change in basis. We must compute the top $(d-\beta_n^{a,b})$ rows of $\begin{bmatrix}X & N\end{bmatrix}^{-1}LX$. Directly computing $S^{-1} L S$ requires one $d\times d$ matrix inverse and two $d\times d$ matrix multiplications, which gives it the same complexity as matrix multiplication. In practice, this is approximately $O(d^3)$, but can be reduced either by using optimized matrix multiplication algorithms or using a linear solver  for $\begin{bmatrix}X & N\end{bmatrix}^{-1} L$, rather than computing an actual inverse.

Computing persistent homology is generally much faster than persistent Laplacians, but there are no major Python or C++ libraries that compute persistent homology with representatives over $\mathbb{R}$ (or $\mathbb{Q}$), and the representatives of homology classes with coefficients in $\mathbb{Z}_p$ will not be the same. The only major existing software package that supports $\mathbb{Q}$ coefficients is JavaPlex, which would introduce complex dependencies and communication between programs. We therefore decouple our software from the computation of persistent homology and take as input a basis of the null space for this algorithm. This allows for persistent homology of the real coefficient to be done externally. By using a pre-computed null space, we can estimate potential performance gains.
From our first approach at this, we find that it offers only a modest speed benefit. However, this is a new algorithm for approaching the relationship between persistent topological Laplacians and persistent homology, and we therefore hope that it may be optimized both in theory and in practice. One possible avenue for this is to reduce the boundary operator before computing $\Delta_n^{a,b}$. 

\section{From large complexes to meaningful spectra}
\label{sec:representations}
We have found several techniques, algorithms, and subtleties in efficiently computing persistent Laplacians. We now want to know how large of a complex we can analyze - and what it means to ``analyze" a complex. The first part is straightforward: compute some selection of persistent Laplacians and their eigenvalues for increasingly large complexes.

To ``analyze" a filtered complex with persistent homology, one often computes the persistence diagram (or the equivalent barcode), and possibly a vectorization. The persistence diagram is the unique decomposition of a persistence module as a direct sum of interval modules. This does not involve the choice of which filtration intervals $(a,b)$ to consider. Instead, we obtain a homological characterization of all of the relevant filtration values.

In contrast, we compute a collection of persistent Laplacians $\Delta_n^{a,b}$ for different filtrations $a$ and $b$. If the set of distinct filtration values is $A=\{a_0,a_1,\dots, a_k\}$, then we can compute all of the possible persistent Laplacians $\{\Delta_n^{a_i,a_j} | a_i, a_j \in A, i\leq j\}$. As we have seen, computing some persistent Laplacian matrices and eigenvalues can be computationally demanding. The number of pairs $a_i,a_j\in A$ with $i\leq j$ is $\frac{|A|(|A|+1)}{2},$ and $|A|$ can be as large as the number of simplices in the complex, if each has a unique filtration value. In Table \ref{tab:eigenvalue_algorithms} we reporte times used in computing the eigenvalues of a complex with $500$ $0$-simplices, $3,850$ $1$-simplices, and $6,249$ $2$-simplices, for a total of $10,599$ simplices. The fastest eigenvalue computation took $6.3$ seconds. If each simplex has a unique filtration value, there are possibly more than $5.6$ million distinct persistent Laplacians. Proportionally, the eigenvalue computations would take over $27$ days. Although Theorem 5.2 of \cite{memoli_PL} describes an algorithm for efficiently computing all pairs of persistent Laplacian matrices, the eigenvalue computation time alone requires us to compute a strict subset of the possible persistent Laplacians. We must make a choice of what that subset is.

One possible method is to consider $\Delta_n^{b_i,b_i+\delta}$ for some fixed $\delta$ and each $b_i\in B\subset A$ \cite{wang2020PSG}, which we have done in some of the examples in this work. This representation is shown in Figure \ref{fig:curve_sphere_rips} for Rips complexes built on points sampled from a sphere. Another method is to consider $\Delta_n^{a_i,a_{i+1}}$ for each filtration value $a_i\in A$, which models the changes between each step \cite{jones_persistent_2025}. A third straightforward approach is to compute a sequence of persistent Laplacians with no persistence, $\Delta_n^{a_i,a_i}$. Each of these produces a $1$-dimensional family of descriptors. One strategy for reducing adaptive computation is to select which persistent Laplacians to compute first by computing persistent homology, then computing only those $\Delta_n^{a,b}$ for which $\beta_n^{a,b}$ is small or zero. In addition to requiring some choice on part of the researcher, these selections miss a core feature of persistent homology: that features change over a variety of scales.

We can instead consider a $2$-dimensional family of features by taking a subset $B=\{b_0,b_1,\dots, b_\ell\}\subset A$ and computing all pairs $\Delta_n^{b_i,b_j}$ for $i\leq j$. Moreover, we can take $(i,j)$ or $(b_i,b_j)$ as coordinates, and at each lattice point display the full distribution of eigenvalues, or some feature we compute from the set of eigenvalues, such as the minimum nonzero eigenvalue. 

In Figure \ref{fig:all_pairs_torus}(a) we again sample $500$ points from a torus, compute the eigenvalues of $\Delta_1^{a,b}$ for an alpha complex, and display histograms of the nonzero eigenvalues. Here we do all possible pairs $(a,b)\in B=\{0.0,1.0,2.0,3.0,4.0\}$ with $a\leq b$ and plot the diagonal as is done with persistence diagrams. Note that there are no $1$-cells at filtration $0$, so there are no nonzero eigenvalues to display for $\Delta_1^{0,b}$. In Figure \ref{fig:all_pairs_torus}(b) we compute a feature (the minimum nonzero eigenvalue) for each of these distributions, which is a triangular matrix that can be displayed as a heatmap or image. The resulting image can then be used for analysis in a variety of ways, such as producing vectorizations for machine learning by flattening or sub-sampling entries from the matrix, transforming the image in a way that models the persistence image \cite{adams_persistence_image}, using a convolution neural network on the image layer\cite{chen2025drug}, or visualization and qualitative analysis.

\begin{figure}[htpb]
    \centering
    \begin{subfigure}{3.0in} % 3.21 instead of 3.7? png file is 4440x3852 at 1200dpi
       \centering
        \includegraphics[width=2.6in,height=2.6in]{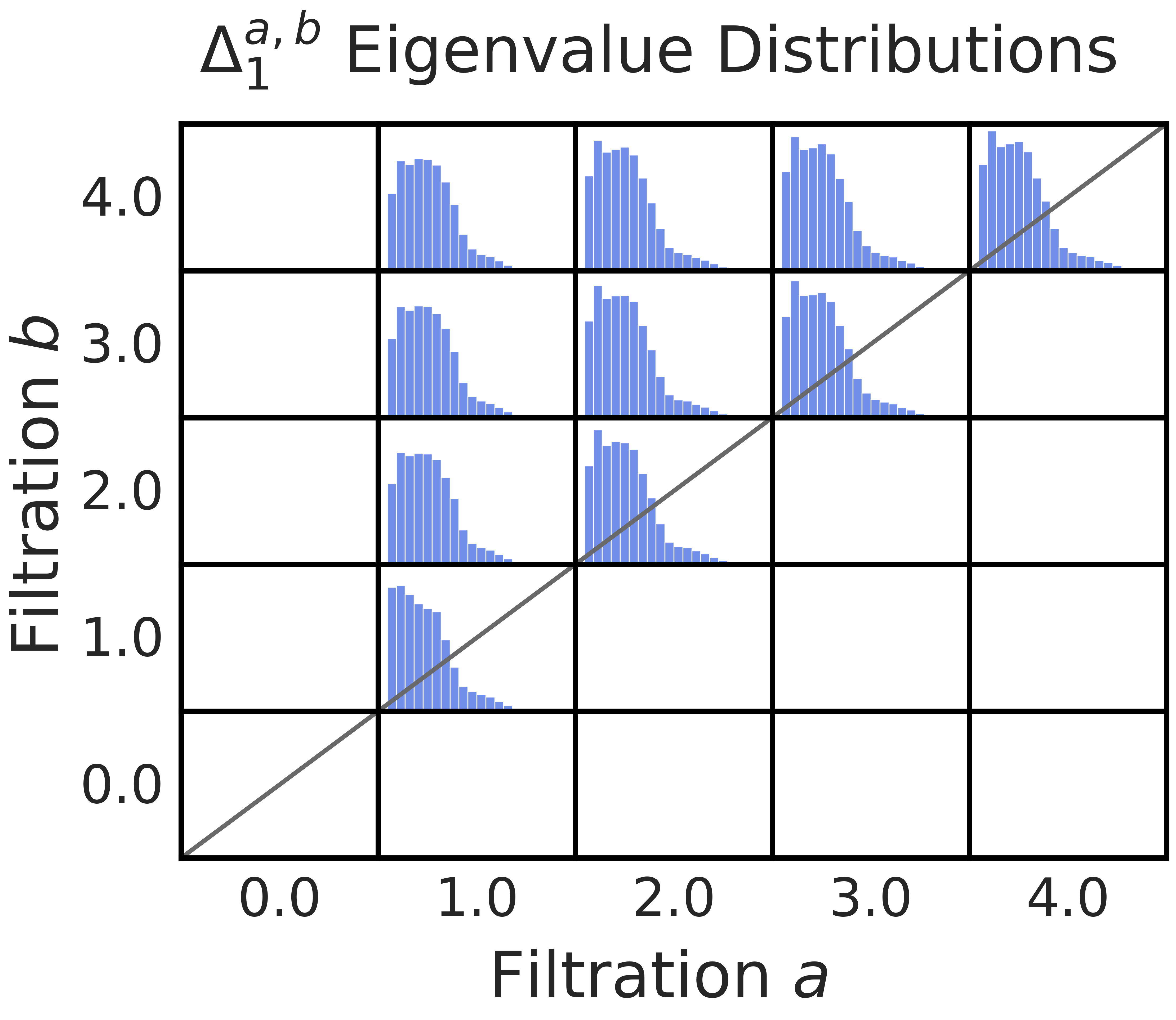}
        \caption{}
    \end{subfigure}
    \hfill
    \begin{subfigure}{3.0in}
        \centering
        \includegraphics[width=2.8in,height=2.8in]{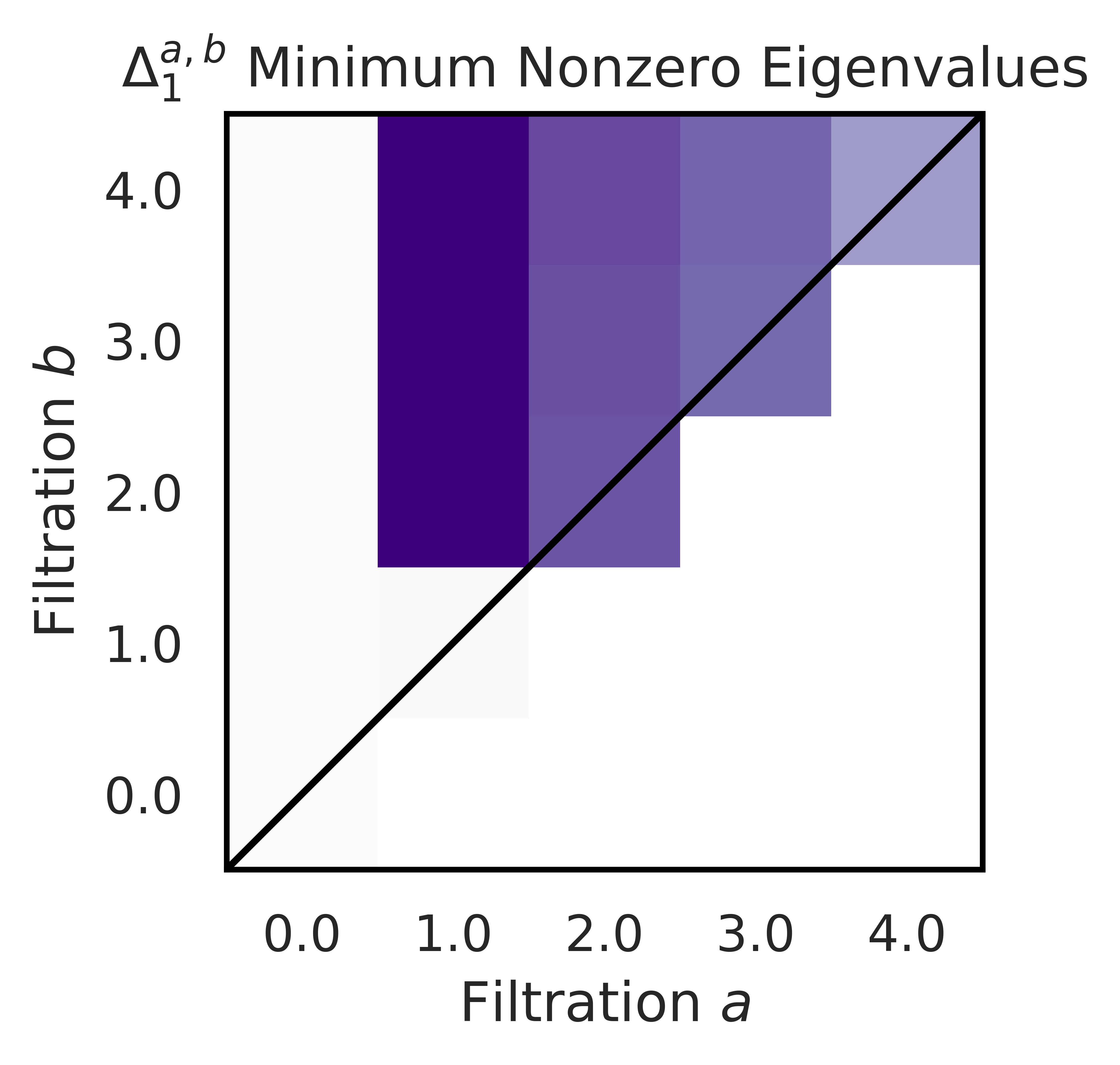}
        \caption{}
    \end{subfigure}
\caption{A $2$-dimensional family of persistent Laplacians $\Delta_1^{a,b}$ is computed for an Alpha complex built on a torus with $500$ points. All pairs with $a,b\in\{0,1,2,3,4\}$ and $a\leq b$ are computed. (a) Nonzero eigenvalue distributions of $\Delta_1^{a,b}$. (b) A statistic (minimum nonzero eigenvalue) is computed for each distribution.}
\label{fig:all_pairs_torus}
\end{figure}

\section{Discussion}
\label{sec:discussion}
Complex problems often have data with interesting topological and geometric structure. Topological data analysis brings many quantitative and qualitative tools to understand the shape of the data. Persistent homology, the hallmark of TDA, efficiently computes large-scale shapes of data, and there are several vectorizations of persistent homology that allow it to shine in machine learning tasks. Persistent topological Laplacians, a spectral theory approach,  generalize both the graph Laplacian and aspects of persistent homology in algebraic topology to an operator that contributes additional geometric and structural information of the data. These have garnered attention from theoretical, algorithmic, and applied domains in recent years, although there remain significant challenges to their widespread adoption. Chief among these challenges is their scalability, as they are inherently computationally demanding. The structure has also been limited by the accessibility and flexibility of computational tools to handle distinct complexes, arbitrary dimensions, and the evolution of research into key algorithms. 

This work provides a framework for addressing these challenges independently of each other. In turn, we show improved scalability with practical examples and comparisons with existing implementations. We find that in many cases, computing the spectra, i.e., eigenvalues and eigenvectors,  is the primary obstruction, and resolutions may defy standard conventions. We propose a novel approach for using persistent homology to understand and compute persistent Laplacians, rather than using persistent Laplacians to inform persistent homology. Based on this framework, we can now also expand the types of analysis to use a larger and more rich family of persistent Laplacians. We can now develop algorithms independently, observe more nuanced eigenvalue behavior, and solve problems on larger scales. 

Although the present PETLS library deals with a few complexes, such as alpha,  directed flag, Dowker, and cellular sheaf, the developed framework allows for the further implementation of other topological spaces.

\section{Software availability}
The source code for the PETLS library can be accessed at the GitHub repository \url{https://github.com/bdjones13/PETLS} or as compiled Python binaries on the Python Packaging Index PyPI \url{https://pypi.org/project/petls/}. Documentation can be found at \url{https://BenJones-math.com/software/PETLS}.

\section*{Acknowledgment}	
This work was supported in part by NIH grant R35GM148196, NSF grant DMS-2052983, and MSU Research Foundation.

\appendix
\section{Complexes and filtrations}
\label{appendix:complexes}
We compute several variants of the persistent Laplacian, built on different complexes. 

\subsection{General interface}
All complexes ultimately proceed to a filtered boundary matrix structure. The minimum amount of information required to produce a complex for which we can compute persistent Laplacians is an ordered list of boundary matrices $\{d_1, d_2,\dots, d_N\}$ and a corresponding list of filtration values for the simplices in each dimension $\{F_0, F_1,\dots, F_N\}$, where each $F_i=\{f_i^0, f_i^1,\dots, f_i^{n_i}\}$. In practice the boundary matrices are a Python list of numpy arrays or a C++ vector of \texttt{Eigen::SparseMatrix<int>}. The filtrations are a Python list-of-lists of floats or a C++ vector-of-vectors of floats.

For example, consider a standard $2$-simplex as a simplicial complex with $0$-simplices $v_0, v_1, v_2$ with filtration values $0.0$, $1$-simplices $[v_0,v_1], [v_0,v_2],[v_1,v_2]$ with filtration values $0.1, 0.2,$ and $0.2$, respectively, and $2$-simplex $[v_0,v_1,v_2]$ with filtration value $1.4$. Then the boundary maps would be

\begin{align*}
d_1 &= \begin{blockarray}{ccccc}
        & & 0.1 & 0.2 & 0.2 \\
        & & [v_0,v_1] & [v_0,v_2] & [v_1,v_2] \\
        \begin{block}{cc(ccc)}
          0.0 & v_0 & -1 & -1 & 0 \\
          0.0 & v_1 & 1 & 0 & -1 \\
          0.0 & v_2 & 0 & 1 & 1 \\
        \end{block}
    \end{blockarray}\\
d_2 &= \begin{blockarray}{ccc}
        & & 1.4 \\
        & & [v_0,v_1,v_2]\\
        \begin{block}{cc(c)}
          0.1 & [v_0,v_1] & 1 \\
          0.2 & [v_0,v_2] &-1 \\
          0.2 & [v_1,v_2] &1\\
      \end{block}
    \end{blockarray}.
\end{align*}

The library would then be given the matrices as
\[\texttt{boundaries}=\left\{\begin{pmatrix}
    -1 & -1 & 0\\
    1 & 0 & -1\\
    0 & 1 & 1
\end{pmatrix}, \begin{pmatrix}
    1\\ -1\\ 1
\end{pmatrix}\right\}\]

and the filtrations as 

\[\texttt{filtrations}=\{[0.0,0.0,0.0], [0.1, 0.2, 0.2],[1.4]\}.\]

A key purpose of this representation is that it is immediately compatible with complexes that are non-simplicial, enabling computation of persistent Laplacians for path complexes \cite{grigoryan_path_2020,wang_persistent_2023}, hyperdigraph complexes \cite{chen_persistent_2023}, and the directed flag complex (an ordered simplicial complex) \cite{jones_persistent_2025,lutgehetmann_computing_2020}.

The Python package provides the additional functionality to produce this filtered boundary matrix structure from any Gudhi Simplex Tree \cite{gudhi:urm}, a commonly used and efficient data structure that can represent any simplicial complex filtration.

\subsection{Vietoris-Rips}
The Vietoris-Rips (VR) filtration is the most common distance-based filtration for producing simplicial complexes of point cloud data. Given a finite metric space $(X,d)$, usually a set of points in $\mathbb{R}^d$ with the Euclidean distance, the Vietoris-Rips complex with parameter $\epsilon$ is $VR_\epsilon(X)=\{\emptyset\neq S\subset X\mid \mathrm{diam } S\leq \epsilon\}$. For $\epsilon\leq \tau$, this produces a filtration of complexes $VR_\epsilon(X)\subset VR_{\tau}(X)$. In Figure \ref{fig:rips} we show the VR filtration for a point cloud $X=\{a,b,c\}$ consisting of points at the vertices of a $3-4-5$ triangle. When $\epsilon < 3$, the only subsets of $X$ with diameter less than $\epsilon$ are the points themselves with diameter $0$. Then at $\epsilon=3$ the $1$-simplex $[a,b]$ is added, since the diameter of $\mathrm{diam}\{a,b\}= 3$, and at $\epsilon=5$ the $1$-simplex $[b,c]$ and $2$-simplex $[a,b,c]$ are added. For $\epsilon>5$ the complex is unchanged. In practical applications, a maximum value of $\epsilon$ is usually set, because the number of $k$-simplices in an non-thresholded VR complex on $n$ points grows at $\Omega(n^k)$ \cite{dey2022computational}.

\begin{figure}[htpb!]
    \centering
    \includegraphics[width=5.069in]{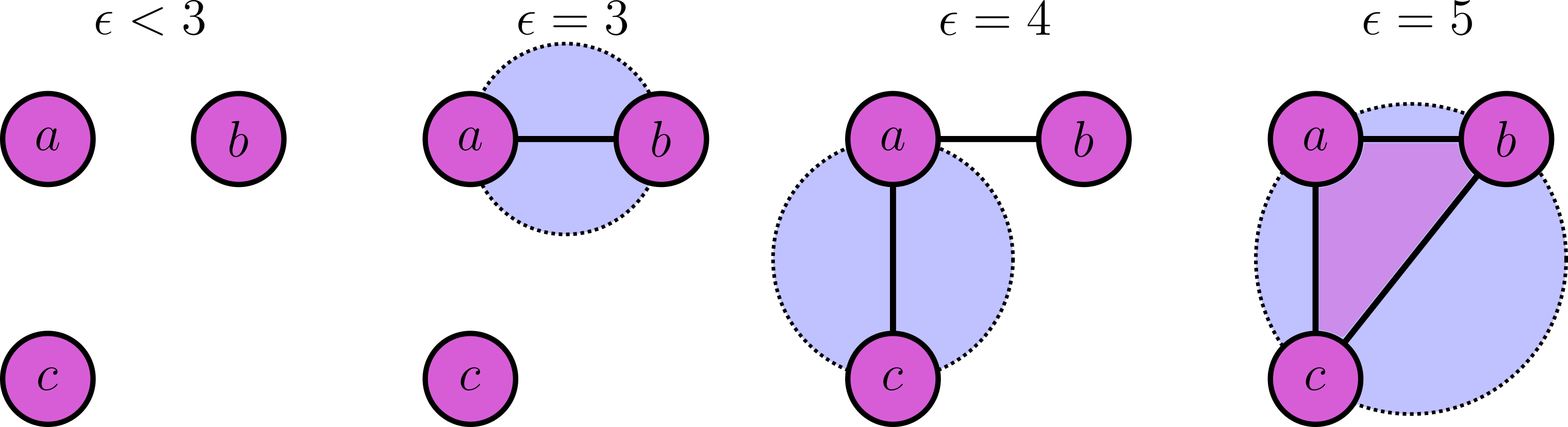}
    \caption{A Vietoris-Rips filtration for a point cloud consisting of points at the vertices of a $3-4-5$ triangle.}
    \label{fig:rips}
\end{figure}

The VR complex depends only on the pairwise distances between points, and therefore can be computed from a distance matrix with no additional information. This also allows for points embedded in an arbitrary ambient dimension. We modify Ulrich Bauer's Ripser \cite{bauer2021ripser} to use real coefficients and produce an explicit filtered boundary matrix. The complex can be built via several methods, such as \verb|rips = petls.Rips(points = my_points)|.

\subsection{Alpha complex}

An Alpha complex is a commonly used simplicial complex for modeling point cloud data in $\mathbb{R}^2$ and $\mathbb{R}^3$. We give a brief recap of the more detailed description in \cite{dey2022computational}. The Alpha complex is a certain subcomplex of the Delaunay complex. The Delaunay complex $\operatorname{Del} P$ of a finite point set $P$ in $\mathbb{R}^d$ in general position is a geometric simplicial complex where for each simplex $\sigma$ there is an open $d$-ball that does not contain any point in $P$ in its interior, but the boundary of the ball contains the vertices of $\sigma$, and where $|\operatorname{Del} P|$ is the convex hull of $P$. There is one such complex. The Alpha complex for $\alpha\geq 0$ is the complex formed by simplices $\sigma$ in $\operatorname{Del} P$ where $\sigma$ has a circumscribing ball of radius at most $\alpha$. The Delaunay complex has a duality relationship with the Voronoi diagram. We use the Alpha complex built by GUDHI \cite{gudhi:urm}, which uses CGAL's Delaunay triangulation \cite{cgal:hs-chdt3-24b}.  Gudhi (and therefore PETLS) uses a convention of giving each simplex a filtration value of the \textit{square} of the circumradius, $\alpha^2$. 

For an example of a $2$D Alpha complex, consider points at the vertices of a $2-4-5$ obtuse triangle in Figure \ref{fig:alpha}. The edge $[b,c]$ is added at filtration value $\alpha^2=1$, the edge $[a,b]$ at $\alpha^2=4$, and both the edge $[a,c]$ and triangle $[a,b,c]$ are added at $\alpha^2\approx 6.93$. Note that the edge $[a,c]$ with side length $5$ is not added at $\alpha^2=\left(\frac{5}{2}\right)^2=6.25$, since the circle with diameter $[a,c]$ includes the point $b$ in the interior, violating the restriction on the open ball.

\begin{figure}[htpb!]
    \centering
    \includegraphics[width=5.189in]{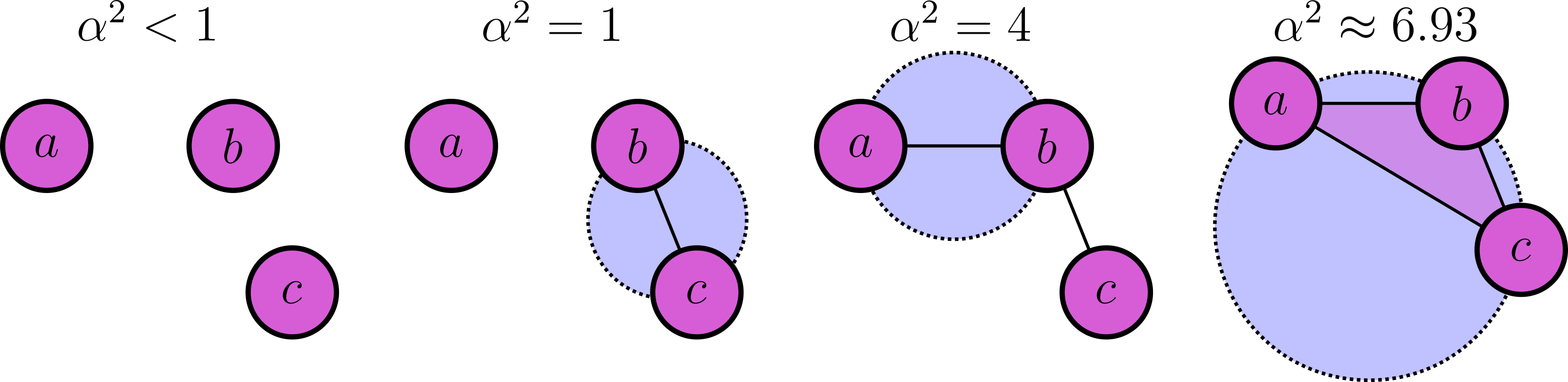}
    \caption{Alpha filtration for a point cloud consisting of points at the vertices of a $2-4-5$ obtuse triangle.}
    \label{fig:alpha}
\end{figure}

\subsection{Directed flag (Clique) complex}

A directed flag (clique) complex is a directed simplicial complex used to model higher-order interactions of directed graphs. A $k$-simplex $\sigma=[v_1,v_2,\dots,v_k]$ is in the directed flag complex $dFl(G)$ of a directed graph $G=(V,E)$ if $[v_i,v_j]$ is an edge for all $1\leq i < j\leq k$. Each $k$-simplex is a complete $(k+1)$-subgraph with a single source vertex ($v_1$) and a single sink vertex ($v_k$). Figure \ref{fig:dflag} shows the directed graph motifs that result in simplices for dimensions $0,1,2,$ and $3$. Vertices and edges are the $0$- and $1$-simplices, while the $2$-simplex is a triangle with the specific orientation of the edges. The other distinct orientation (replacing $[v_1,v_3]$ with $[v_3,v_1]$) results in a homological cycle which is not a boundary, and therefore the directed flag complex is sensitive to orientation and asymmetry. A filtered directed flag complex can have any filtration that respects inclusion. We use a modification \cite{jones_persistent_2025} 
of the flagser software  \cite{lutgehetmann_computing_2020}, which is based on Ripser \cite{bauer2021ripser} and computes the persistent homology of directed flag complexes.

\begin{figure}[htpb!]
    \centering
    \includegraphics[width=4.0in]{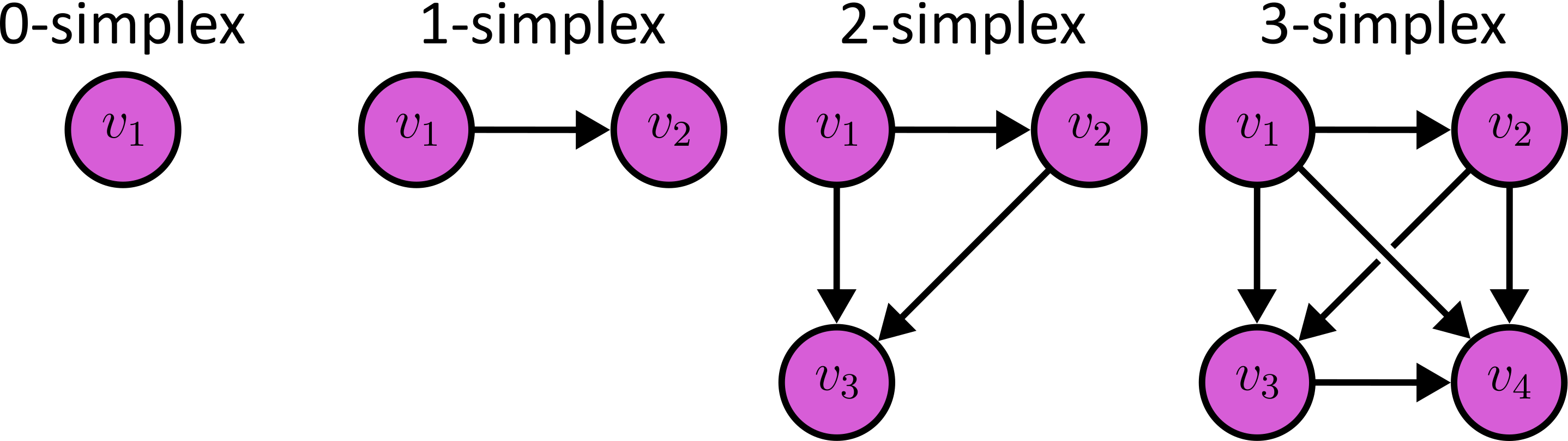}
    \caption{Graph templates for simplices in a directed graph.}
    \label{fig:dflag}
\end{figure}

\subsection{Dowker complex}
The Dowker complex \cite{dowker1952homology} and its persistent version \cite{chowdhury2018functorial} create a simplicial complex meant to encode the structure of a relation, which is possibly asymmetric, between two sets. The (persistent) homology of this complex may reveal structure of the relation. 

Using the definitions and notation of \cite{chowdhury2018functorial}, suppose $X$ and $Y$ are two totally ordered sets and $R\subset X\times Y$ is a nonempty relation. There are \textit{two} simplicial complexes $E_R$ and $F_R$, which have isomorphic homology, $H_k(E_R)\cong H_k(F_R)$, and the geometric realizations of the complexes are homotopy equivalent. Chowdhury and M{\'e}moli \cite{chowdhury2018functorial} gave a functorial version of the isomorphism and introduced Dowker persistence, which can detect asymmetry in networks. Because of these equivalences, many authors refer to ``the" Dowker complex. The (persistent) Laplacian is not invariant up to homotopy, and therefore the Dowker complexes can have distinct persistent Laplacians and eigenvalues. Possible relationships between the eigenvalues of the (persistent) Laplacians of the two Dowker complexes may be of interest. 

A simplex $\sigma\subset X$ is in $E_R$ if there exists $y\in Y$ such that $(x,y)\in R$ for every $x\in \sigma$, and analogously $\tau\subset Y$ is in $F_R$ if there exists $x\in X$ such that $(x,y)\in R$ for every $y\in\tau$. For example, see Figure \ref{fig:dowker}(a), where the sets $X=\{a,b,c\}$ and $Y=\{i,j,k\}$ have a relation $R\subset X\times Y$, which can be expressed as a matrix:

\[
R = \begin{blockarray}{cccc}
            & i & j & k \\
            \begin{block}{c(ccc)}
                {a} & 1 & 1 & 1\\
                {b} & 1 & 1 & 0 \\
                {c} & 0 & 0 & 1 \\
            \end{block}
        \end{blockarray}
\]

Then we can find that $E_R=\{\emptyset, [a],[b],[c],[a,b],[a,c]\}$ (notably excluding $[b,c]$ and $[a,b,c]$) and $F_R=\{\emptyset,[i],[j],[k],[i,j],[i,k],[j,k],[i,j,k]\}$. The geometric realizations of these simplicial complexes are in Figure \ref{fig:dowker}(b). Both are contractible, and therefore homotopy equivalent with trivial homology.

\begin{figure}[htpb]
    \centering
    \begin{subfigure}{1.3in}
       \centering
        \includegraphics[width=1.3in]{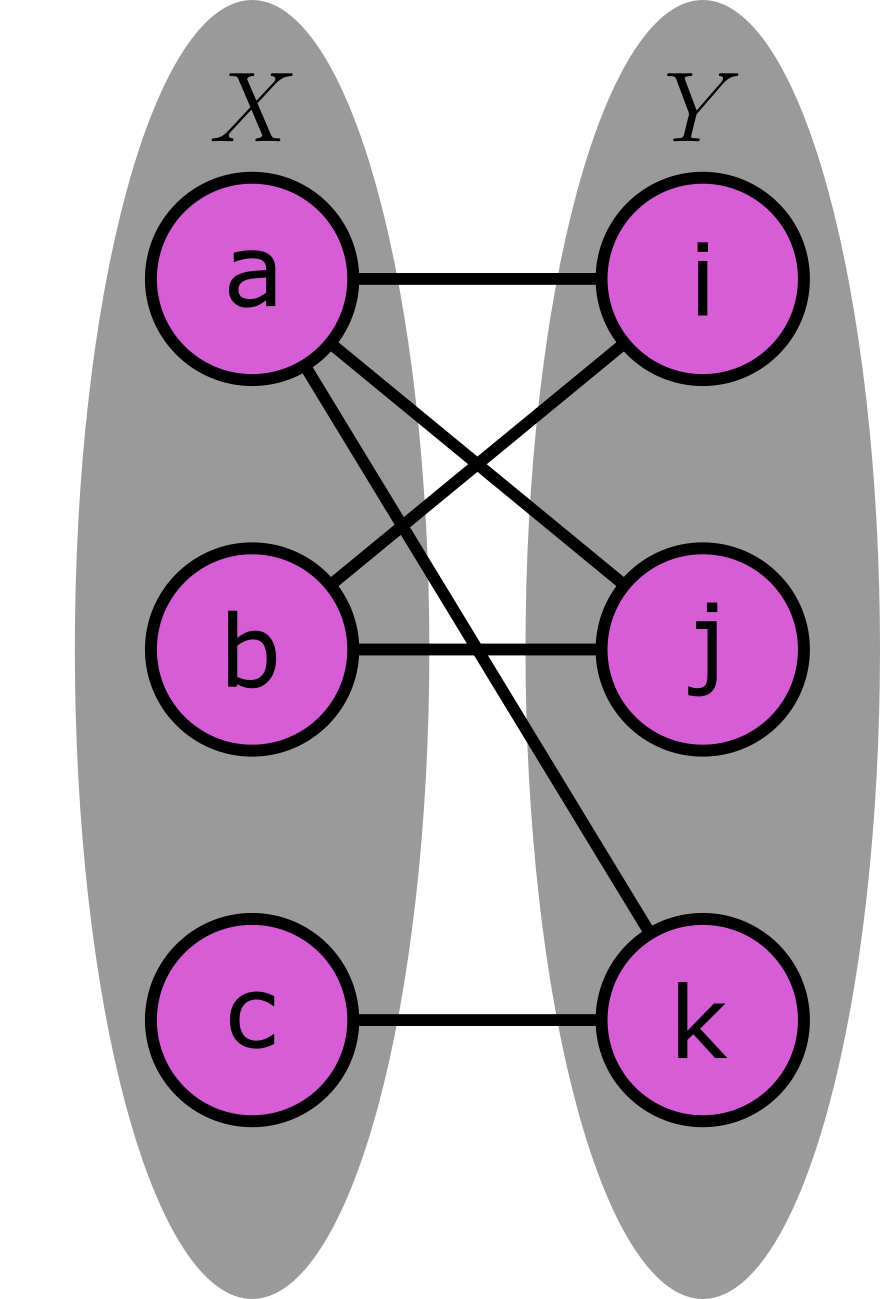}
        \caption{}
    \end{subfigure}
    \begin{subfigure}{2.2in}
        \centering
        \includegraphics[width=2.2in]{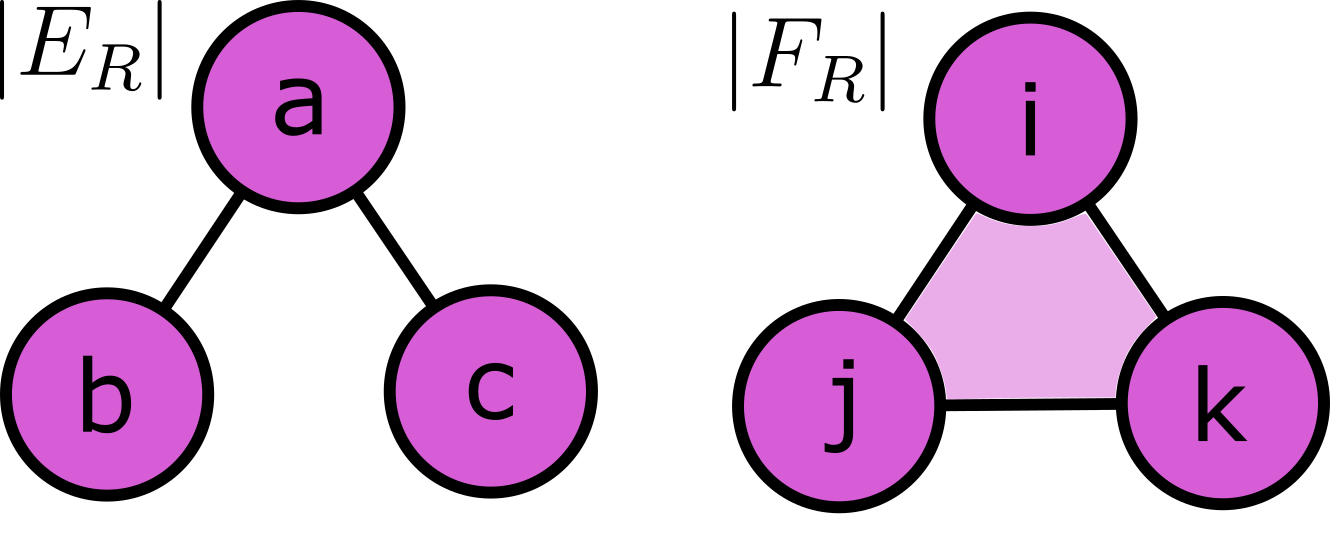}
        \caption{}
    \end{subfigure}
\caption{(a) Visual representation of a relation $R$ between sets $X$ and $Y$. (b) Geometric realizations of the two Dowker complexes corresponding to $R$.}
\label{fig:dowker}
\end{figure}

For a weighted directed graph $(X,\omega)$, one can define a filtration of relations $R_{\delta, X}=\{(x,x'):\omega(x,x')\leq \delta\}\subset X\times X$. Then the Dowker sink filtration \cite{chowdhury2018functorial} is 
\[
\mathfrak{D}_{\delta, X}^{\mathrm{si}}=\{\sigma=[x_0,\dots,x_n]:\text{there exists } x'\in X \text{ such that }(x_i,x')\in R_{\delta,X}\text{ for each }x_i\}.
\]

That is, there exists $x'\in X$ such that from each vertex $x_i$ in the simplex, there is an edge from $x_i$ to $x'$ meeting the filtration threshold $\delta$. An analogous Dowker source filtration exists and the persistence diagrams of the Dowker source and sink filtrations are equal.

Filtered Dowker complexes are implemented in pyDowker \cite{hellmer2024density}, which produces a compatible simplex tree that can be immediately used by PETLS to construct a persistent Laplacian.

\subsection{Cellular sheaf}\label{appendix:sheaf} 
A cellular sheaf is an additional structure associated with a simplicial (or cell) complex. A comprehensive discussion is in Justin Curry's dissertation \cite{curry2013sheaves}. An extension to combinatorial Laplacians is in \cite{hansen2019toward} and an extension to persistence is in \cite{wei_sheaf_2025}. We will give an abbreviated version of the relevant pieces. The theory of cellular sheaves is defined for more general cell complexes than we require for applications involving point cloud data, so we will restrict our discussion to simplicial complexes. The distinctions are explained in \cite{curry2013sheaves}.

A cellular sheaf $\mathscr{S}$ on a simplicial complex $X$ consists of vector spaces $\mathscr{S}(\sigma)$ for each $\sigma\in X$ called stalks and for each face relationship $\sigma\leq\tau$ (where $\sigma$ is a face of $\tau$) a linear map $\mathscr{S}_{\sigma\leq\tau}:\mathscr{S}(\sigma)\to\mathscr{S}(\tau)$ called the restriction map. The restriction map must satisfy a composition rule: if $\rho\leq\sigma\leq\tau$ then $\mathscr{S}_{\rho\leq\tau}=\mathscr{S}_{\sigma\leq\tau}\circ\mathscr{S}_{\rho\leq\sigma}$. It must also satisfy $\mathscr{S}_{\sigma\leq\sigma}=\mathrm{id}$. More succinctly as said in \cite{hansen2019toward}, if $P_X=(X,\leq)$ is the poset where $\leq$ is the face relation,  then a cellular sheaf is a functor $\mathscr{S}:P_X\to \mathbf{Vect}_{\mathbb{K}}$, for a field $\mathbb{K}$ (which we will assume is $\mathbb{R}$).

To define the relevant cohomology theory, we use the cochain spaces as $C^n(X;\mathscr{S})=\bigoplus_{\dim(\sigma)=n}\mathscr{S}(\sigma)$. The coboundary map of cellular sheaves traditionally relies on the notion of a signed incidence relation to encode the orientation of simplices \cite{curry2013sheaves}. In our case, the vertices in a finite simplicial complex will be ordered, so the ordering of vertices in a simplex determine the orientation of the simplex, so we can use a signed incidence relation to that is the same as the standard notion of assigning signs \cite{wei_sheaf_2025}. That is, for $\tau = [v_0,v_1,\dots, v_n]$ and $\sigma=[v_0,v_1,\dots,\hat{v_i},\dots, v_n]$, the sign is $[\sigma:\tau]=(-1)^{i}$, and the coboundary map is $d^n:C^n(X;\mathscr{S})\to C^{n+1}(X;\mathscr{S})$ given by $d^n\mid_{\mathscr{S}(\sigma)}=\sum_{\sigma\leq\tau}[\sigma:\tau]\mathscr{S}_{\sigma\leq\tau}$. Then $d^n\circ d^{n-1}=0$ and $(C^\bullet(X;\mathscr{S}), d^\bullet)$ is a cochain complex \cite{curry2013sheaves} with cohomology $H^n(X;\mathscr{S})=\ker d^{n}/\Ima d^{n-1}$. 

The persistent Laplacian is normally constructed for a chain complex, rather than a cochain complex. However, in our setting with stalks that are finite dimensional inner product spaces (e.g. copies of $\mathbb{R}^d$), the stalks are self-dual, and we can define the persistent Laplacian as usual on a dualized diagram \cite{wei_sheaf_2025}.

Suppose we have the simplicial complex on vertices $v_0, v_1, v_2$ in Figure \ref{fig:cellular_sheaf}(a). In Figure \ref{fig:cellular_sheaf}(b) we show the corresponding stalks for a general cellular sheaf $\mathscr{S}$. In Figure \ref{fig:cellular_sheaf}(c) we add the restriction maps in blue. In Figure \ref{fig:cellular_sheaf}(d) we provide explicit spaces and maps for the stalks and restrictions. Note the consistency demonstrated in the two different routes of composing to $\mathscr{S}_{[0]\leq[012]}$:

\begin{gather*}
    \mathscr{S}_{[01]\leq[012]}\circ\mathscr{S}_{[0]\leq[01]} = \mathscr{S}_{[0]\leq [012]}= \mathscr{S}_{[02]\leq[012]}\circ\mathscr{S}_{[0]\leq[02]}\\
    \begin{bmatrix}
        5 & 1
    \end{bmatrix}\begin{bmatrix}
        1 \\ 1
    \end{bmatrix}=6 = \begin{bmatrix}
        3
    \end{bmatrix}\begin{bmatrix}
        2
    \end{bmatrix}
\end{gather*}

\begin{figure}[htpb]
    \centering
    \begin{subfigure}{2.336in}
       \centering
        \includegraphics[width=2.336in]{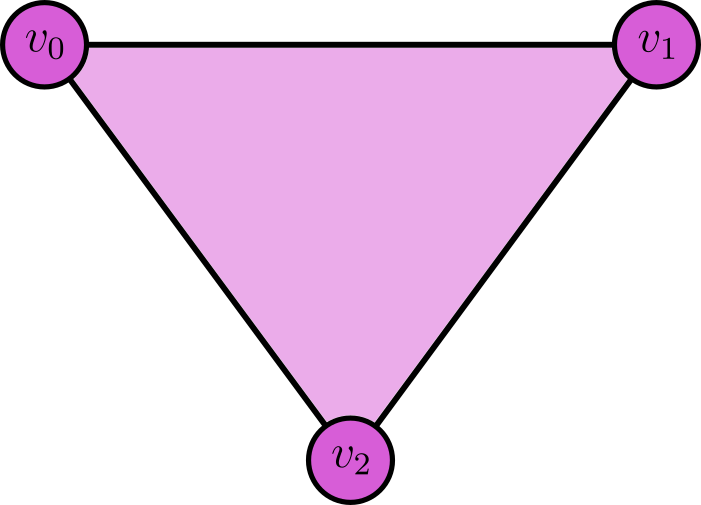}
        \caption{}
    \end{subfigure}
    \hfill
    \begin{subfigure}{3.513in}
        \centering
        \includegraphics[width=3.513in]{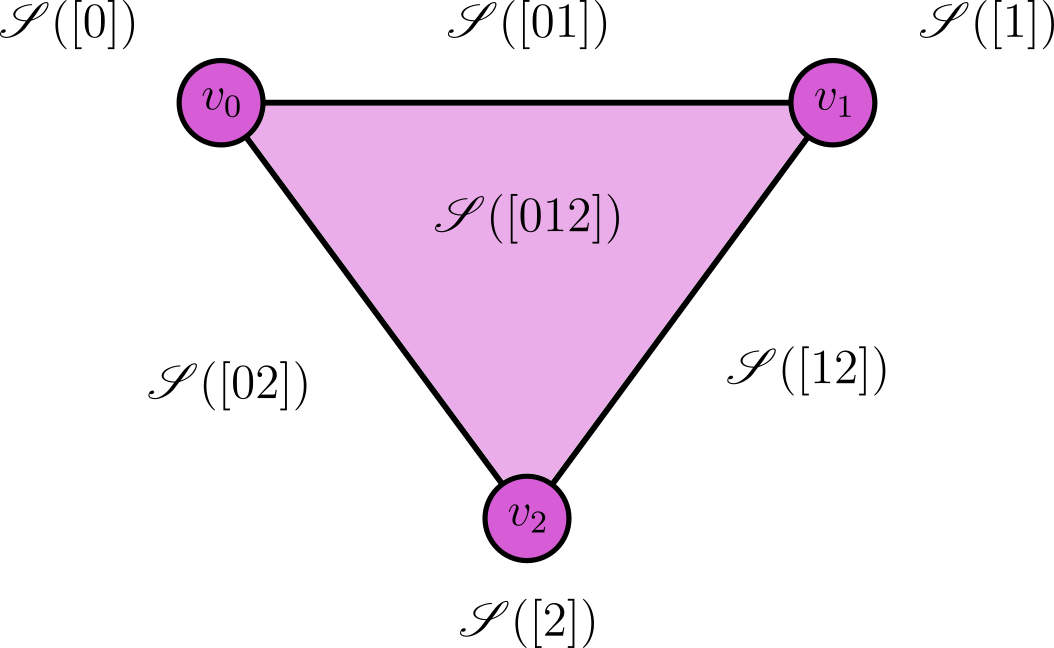}
        \caption{}
    \end{subfigure}
    \begin{subfigure}{3.287in}
    % \begin{subfigure}{3.558in}
        \centering
        \includegraphics[width=3.287in]{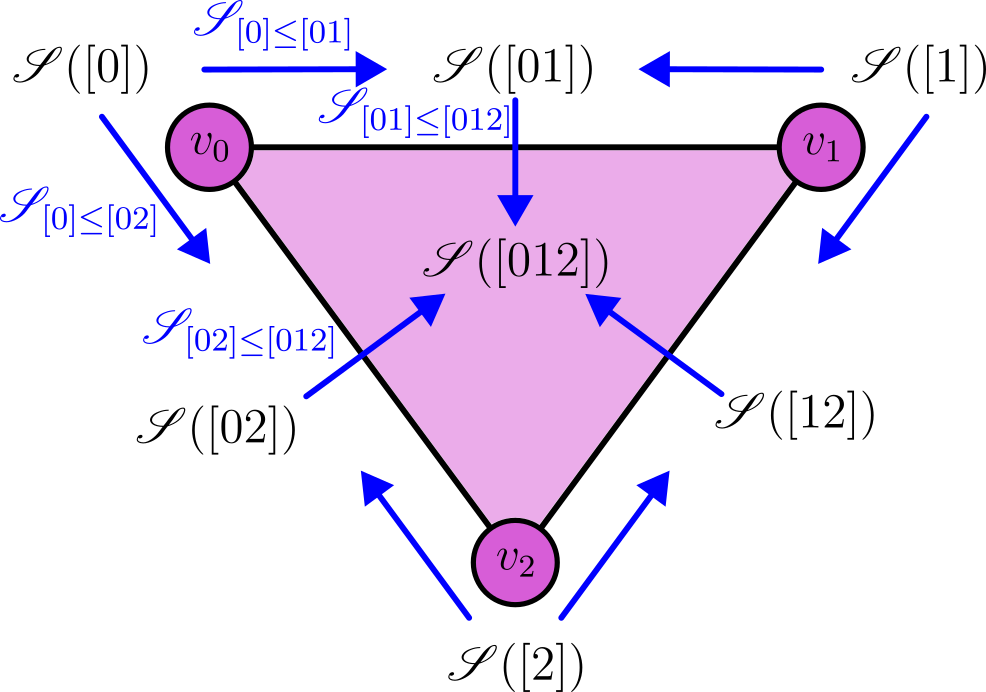}
        \caption{}
    \end{subfigure}
    \hfill
    \begin{subfigure}{2.888in}
        \centering
        \includegraphics[width=2.888in]{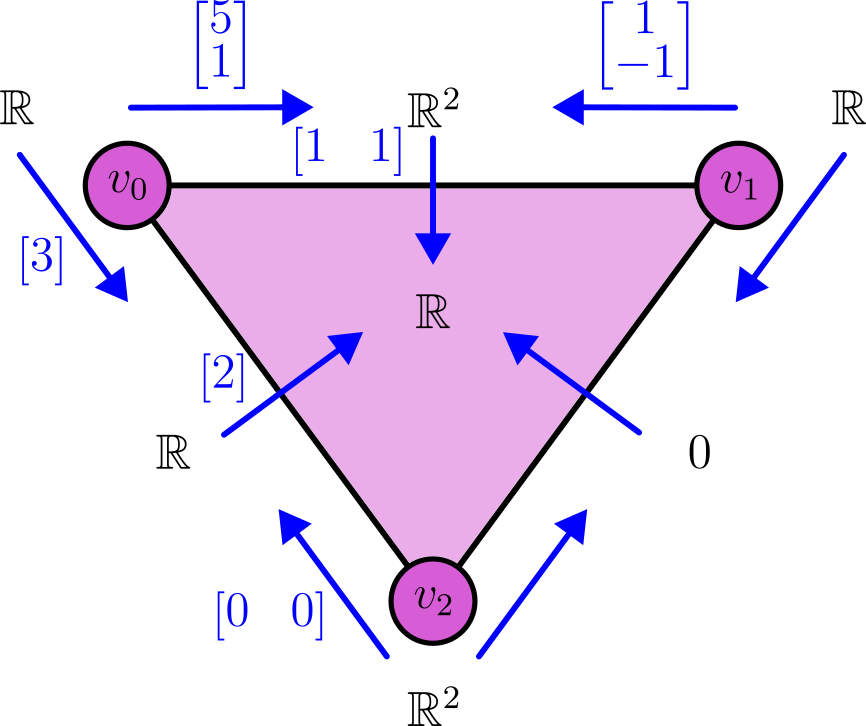}
        \caption{}
    \end{subfigure}
\caption{A cellular sheaf on a simplicial complex. (a) the simplicial complex with three vertices, three edges, and one $2$-simplex. (b) The stalks $\mathscr{S}(\sigma)$ for each simplex. Note that we have written $\sigma=[ijk]$ rather than $\sigma=[v_i,v_j,v_k]$ for clarity. (c) The restriction maps are added in blue. (d) explicit stalks and restriction maps are given.}
\label{fig:cellular_sheaf}
\end{figure}

%%  The bibliography
 \bibliographystyle{abbrv}
\bibliography{main}

\end{document}